\def\mZ{\mathbb{Z}}
\newtheorem{theorem}{Theorem}
\newtheorem{lem}[theorem]{Lemma}
\newtheorem{proposition}[theorem]{Proposition}
\theoremstyle{definition}
\newtheorem{definition}[theorem]{Definition}
\theoremstyle{remark}
\newtheorem{remark}[theorem]{Remark}
\def\Cin{C_{\mathrm{inn}}}
\def\Cext{C_{\mathrm{ext}}}
\def\Bin{B_{\mathrm{inn}}}
\def\Bext{B_{\mathrm{ext}}}
\def\wh{\widehat}
\title{Canonical ordering for graphs on the cylinder, with applications
to periodic straight-line drawings on the flat cylinder and torus}
\author{Luca Castelli Aleardi$^{*}$ 
           \and Olivier Devillers$^{\dagger}$
        \and   \'Eric Fusy$^*$}
\thanks{\phantom{1}\hspace{-.6cm} $^*$ LIX - { \'Ecole Polytechnique}, Palaiseau, France, amturing,fusy@lix.polytechnique.fr.
Supported by the ANR grant ``EGOS'' 12-JS02-002-01
and the ANR grant ``GATO" ANR-16-CE40-0009-01.\\
$^{\dagger}$ INRIA  {  Nancy - Grand est}, France, olivier.devillers@inria.fr}
\begin{document}

\begin{abstract}
We extend the notion of canonical ordering (initially developed for planar triangulations and 3-connected planar maps) 
to cylindric (essentially simple) triangulations and more generally to cylindric (essentially internally) $3$-connected maps. 
This allows us to extend the incremental straight-line drawing algorithm of
 de Fraysseix, Pach and Pollack (in the triangulated case) and of Kant (in the $3$-connected case) to this setting. 
Precisely, for any cylindric essentially internally 
$3$-connected map $G$ with $n$ vertices, we can obtain in linear time a periodic (in $x$) 
 straight-line drawing of $G$ that is crossing-free and internally (weakly) convex,  
on a regular grid $\mZ/w\mZ\times[0..h]$, with $w\leq 2n$
and $h\leq n(2d+1)$, where $d$ is the face-distance between the two 
boundaries.  
This also yields an efficient periodic drawing algorithm 
for graphs on the torus. Precisely, for any essentially $3$-connected map $G$ on the torus (i.e., $3$-connected
in the periodic representation) with $n$ vertices,  
we can compute in linear time a periodic straight-line
drawing of $G$ that is crossing-free and (weakly) convex, on a periodic regular grid 
$\mZ/w\mZ\times\mZ/h\mZ$, with $w\leq 2n$ and $h\leq 1+2n(c+1)$, 
where $c$ is the face-width of $G$. Since $c\leq\sqrt{2n}$, the grid area is $O(n^{5/2})$. 
\end{abstract}

\maketitle

\pagestyle{plain}


\section{Introduction}

The problem of efficiently computing straight-line drawings of planar graphs has attracted a lot of  attention over the last two decades. 
Two combinatorial concepts for planar triangulations turn out to be the basis of many classical
straight-line drawing algorithms:
 the \emph{canonical ordering} (a special ordering of the vertices obtained by a shelling procedure)
and the closely related 
\emph{Schnyder wood} (a partition of the inner edges of a triangulation 
 into $3$ spanning trees with specific incidence conditions). 
Algorithms based on the canonical ordering~\cite{FPP90,Kan96,ChKa97,Miu01,Bra08} 
 are typically incremental, adding
vertices one by one while keeping the drawing planar.
Algorithms based on Schnyder woods~\cite{Sch90,Fe01,BoFeMo07} are more global, 
the (barycentric) coordinates of each vertex have a clear combinatorial meaning
(typically the number of faces in certain regions associated to the vertex). 
Algorithms of both types make it possible to draw in linear time a planar triangulation with $n$ vertices on a grid of size $O(n\times n)$. They can also both be extended~\cite{Fe01,Kan96} 
 to obtain (weakly) convex drawings of $3$-connected maps on a grid of size $O(n\times n)$. 
The problem of obtaining planar drawings of higher genus graphs has been addressed less
frequently~\cite{Ko01,Go06,Moh96,MR98,CEGL11,DGK11,Zit94}, 
from both the theoretical and algorithmic point of view.
Recently some methods for the straight-line planar drawing of genus $g$ graphs with polynomial
grid area (of size $O(n^3)$, in the worst case) have been described%
~\cite{CEGL11,DGK11}
(to apply these methods the graph needs to be unfolded planarly along a \emph{cut-graph}).  
However, these methods do not yield (at least easily)
periodic representations: for example, in the case of a torus, the  boundary vertices (on the boundary of the rectangular frame) might not be aligned, so that the drawing does not
give rise to a periodic drawing. A 
recent article~\cite{CFK14} achieves an adaptation 
of these methods to get alignment of opposite vertices
while keeping the size of the (periodic) grid polynomial, but 
with the drawback of having 
a quite large exponent, the guaranteed grid area beeing $O(n^8)$.  
Another method for drawing toroidal graphs 
with polynomial grid size is the algorithm of Gon\c{c}alves and L\'ev\^eque~\cite{GL11}, which is an adaptation of Schnyder's drawing principles to the torus\footnote{Their method relies
on the existence of certain orientations where
every vertex has degree $3$; they have also recently applied these
methods to design a bijective encoding scheme 
for toroidal triangulations~\cite{DGL15}; regarding 
extension to higher genus, it has been proved in~\cite{AGK16}
that a genus $g$ triangulation always admits an orientation
where every vertex outdegree is a non-zero multiple of $3$.}. It achieves both 
the periodicity requirement and polynomial grid-size; precisely the
size of the (periodic) regular grid is $O(n^2\times n^2)$ for simple
toroidal triangulations 
(no loops or multiple edges) 
and is $O(n^4\times n^4)$ for essentially
simple (simple in the periodic representation) toroidal
triangulations. 
Gon\c{c}alves and L\'ev\^eque 
also extend
these ideas to 3-connected toroidal maps (similarly, Schnyder woods for plane triangulations
have been extended to plane 3-connected maps~\cite{Fe01}), 
but as opposed to the planar case~\cite{Fe01},
the periodic drawings of 3-connected toroidal maps they obtain do not necessarily have the desired convexity property.

The main contributions of this article are  
efficient (in terms of grid size) algorithms to obtain crossing-free convex straight-line drawings
of (essentially) 3-connected maps on the cylinder and then on the torus. 
The key idea here is to adapt the principles of the iterative algorithms based
on canonical orderings~\cite{FPP90,Kan96} to the cylinder\footnote{Another  notion of canonical ordering for toroidal triangulations has
been introduced in~\cite{CFL09} (this actually works in any genus and yields
an efficient encoding procedure) but we will not use it here.}, first in the case of triangulations, then 3-connected maps
(as in the planar case, the 3-connected case is technically more involved).  
Precisely, we first adapt the notion of canonical ordering (a certain shelling procedure) 
and the incremental straight-line drawing algorithm of de Fraysseix, Pach and Pollack~\cite{FPP90}
(shortly called FPP algorithm thereafter) 
to triangulations on the cylinder (Section~\ref{sec:cylindric_drawing}). 
Then, more generally we can also extend the notion of canonical ordering and 
convex straight-line drawing of Kant~\cite{Kan96} to (essentially internally) 
$3$-connected maps on the cylinder (Section~\ref{sec:cyl3conn}).  
Precisely, for any essentially internally  
$3$-connected maps $G$ on the cylinder, our algorithm yields in linear time a 
 crossing-free internally convex straight-line drawing of $G$ 
 on a regular grid (on the flat cylinder) 
 of the form $\mZ/w\mZ\times [0..h]$, with $w\leq 2n$ and $h\leq n(2d+1)$, 
 where $n$ is the number of vertices of $G$
 and $d$ is the face-distance between the two boundaries of $G$ (smallest possible number of faces 
traversed by any curve connecting the two boundaries and meeting $G$ only at vertices).  

Then (in Section~\ref{sec:drawtorus}), we explain how to obtain periodic drawings on the torus 
by a reduction to the cylindric case (the reduction is done
with the help of a so-called \emph{tambourine}~\cite{Bon05}). 
For any essentially $3$-connected toroidal map $G$ (i.e.,
$3$-connected in the periodic representation) 
with $n$ vertices and so-called \emph{face-width} $c$ (smallest possible number of points of $G$  met by a non-contractible curve), 
we can compute in linear time a (weakly) convex periodic 
 straight-line drawing of $G$
 on a regular grid (on the flat torus) of size $w\times h$, with $w\leq 2n$ and $h\leq 1+2n(c+1)$. 
Since $c\leq(2n)^{1/2}$%
~\cite{Al78}, we have $h\leq(2n)^{3/2}$, 
so that the grid area is $O(n^{5/2})$. 
This improves upon the previously best known grid size for the torus, of $O(n^2\times n^2)$, by Gon\c{c}alves and L\'ev\^eque~\cite{GL11},
and  also always gives a (weakly)  convex drawing, which was not guaranteed in~\cite{GL11}. 

 
\vspace{.2cm}

\noindent{\bf Note.} This is the full paper version of a conference article that has appeared in the proceedings of the conference Graph Drawing'12, which only 
covered (with less details) the case of triangulations on the cylinder and on the torus. 

\section{Preliminaries}

\subsection{Graphs embedded on surfaces. }
A \emph{map} of genus $g$ is a connected 
graph $G$ embedded on the compact orientable 
surface $S$ of genus $g$, such that all components of $S\backslash G$ are
topological disks, which are called the \emph{faces} of the map. 
The map is called \emph{planar} 
for $g=0$ (embedding on the sphere) and \emph{toroidal} for $g=1$
(embedding on the torus). The \emph{dual} of a map $G$ is the map $G^*$ representing the adjacencies of
the faces of $G$, i.e., there is a vertex $v_f$ of $G^*$ in each face $f$ of $G$, and each edge $e$
of $G$ gives rise to an edge $e^*=\{v_f,v_{f'}\}$ in $G^*$, where 
$f$ and $f'$ are the faces on each side of $e$.  
A \emph{cylindric map} is a planar map $G$ with two marked faces $\Bin$ and $\Bext$ 
whose boundaries $\Cin$ and  $\Cext$ are simple cycles 
($\Cin$ and  $\Cext$ might share vertices and edges). The faces $\Bin$ and $\Bext$ are respectively called the 
\emph{inner boundary-face} and the \emph{outer boundary-face} (we will often consider cylindric maps  in the annular representation where $\Bext$ is the outer face,
as shown in Fig.~\ref{fig:cylindric_map} left-part).   
The other faces are called \emph{internal faces}.  
Boundary vertices and edges 
are those belonging to $\Cin$ 
(gray circles in Fig.~\ref{fig:cylindric_map}) 
or  $\Cext$ 
(black circles in Fig.~\ref{fig:cylindric_map}); 
 the other ones are called 
\emph{internal} vertices  
(white circles in Fig.~\ref{fig:cylindric_map})  
and edges. The notations $G$, $\Bin$, $\Bext$, $\Cin$, $\Cext$ will be used throughout the article. 

\begin{figure}[t]
\begin{center}
\includegraphics[width=10cm]{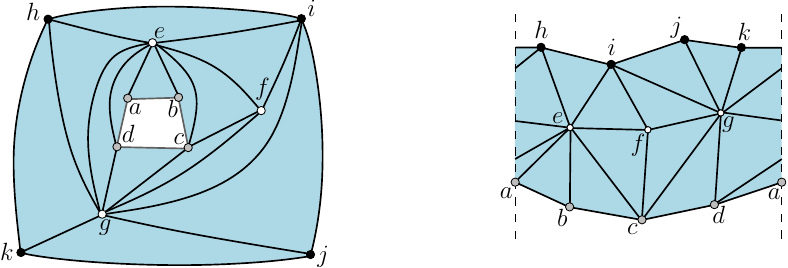}
\end{center}
 \caption{A cylindric triangulation with boundary faces $\Bin=\{a, b, c, d \}$ and $\Bext=\{h, i, j, k \}$. Left: annular representation. 
Right:  $x$-periodic representation.} 
 \label{fig:cylindric_map}
\end{figure}

\begin{figure}[t]
\begin{center}
\includegraphics[width=10cm]{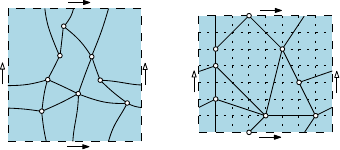}
\end{center}
 \caption{Left: an essentially 3-connected toroidal map $G$. Right: a (weakly) convex
straight-line drawing of $G$ on a periodic regular grid of size $8\times 7$.}
\label{fig:ex_torus_draw}
\end{figure}

\subsection{Periodic drawings.}
Here we consider the problem of drawing a cylindric map 
on the flat cylinder and drawing a toroidal map on the flat torus.  
For $w>0$ and $h>0$,  
the \emph{flat cylinder} of width $w$ and height $h$ is the 
rectangle $[0,w]\times[0,h]$ where the vertical sides
are identified. A point on this cylinder is located 
by two coordinates $x\in\mathbb{R}/w\mathbb{Z}$ and $y\in[0,h]$. 
The \emph{flat torus} of width $w$ and height $h$ is the rectangle $[0,w]\times[0,h]$
where both pairs of opposite
sides are identified. A point on this torus is located 
by two coordinates $x\in\mathbb{R}/w\mathbb{Z}$ and $y\in\mathbb{R}/h\mathbb{Z}$. 
Assume from now on that $w$ and $h$ are positive integers. 
For a cylindric map $G$, a \emph{periodic straight-line drawing}
of $G$ of width $w$ and height $h$ is a crossing-free straight-line drawing
(edges are drawn as segments, two edges can meet only at common end-points)
of $G$ on the flat cylinder of width $w$ and height $h$, such that the vertex-coordinates are in
$\mZ/w\mZ\times[0..h]$ (i.e., are integers).  
Similarly, for a toroidal map $G$, a \emph{periodic straight-line drawing}
of $G$ of width $w$ and height $h$ is a crossing-free straight-line drawing
(edges are drawn as segments, two edges can meet only at common end-points)
of $G$ on the flat torus of width $w$ and height $h$, such that the vertex-coordinates are in
$\mZ/w\mZ\times\mZ/h\mZ$ (i.e., are integers). 
A periodic straight-line drawing on the flat torus is said to be (weakly) \emph{convex}
if all corners have angle at most $\pi$, see Fig.~\ref{fig:ex_torus_draw} for an example.  
Note that a drawing of a toroidal triangulation
is automatically convex, so that convexity becomes a constraint only when there are faces
of degree larger than $3$. 

\section{Periodic drawings of cylindric triangulations}\label{sec:cylindric_drawing}
In this section we describe an algorithm to obtain periodic (in $x$) drawings of 
cylindric triangulations. These results are to be extended to 3-connected maps on the cylinder
in Section~\ref{sec:cyl3conn}; we start here with the case of triangulated maps for pedagogical reasons
(the different steps are the same as the ones to be used in the more general 3-connected case;
but the arguments at each step are simpler in the triangulated case). 

\subsection{Definitions and statement of the result}
A \emph{cylindric triangulation} is a cylindric map $T$  
such that all internal faces are triangles. A cylindric map is called
\emph{simple} if it has no loops nor multiple edges, and is called \emph{essentially simple}
if it has no loops nor multiple edges in the periodic representation. 
Note that an essentially simple cylindric map might have 2-cycles and 1-cycles (loops), which 
have to be \emph{non-contractible}
(they have $\Bin$ on one side and $\Bext$ on the other side), and two loops
can not be incident to a same vertex. 
We also define a \emph{chordal edge}, or \emph{chord}, at $\Cin$  
as an edge
not on $\Cin$ but with its two ends on $\Cin$. Similarly a \emph{chord}  
at $\Cext$ is an edge not on $\Cext$ but with its two ends on $\Cext$. 
For a cylindric map, the \emph{edge-distance} $d$ between the two boundaries is the length of a shortest possible
path starting from a vertex of $\Cin$ and ending at a vertex of $\Cext$ (possibly $d=0$). 
The main result obtained in this section is the following:

\begin{theorem}\label{thm:triang}
For each essentially simple cylindric triangulation $G$, one can compute in linear time
a crossing-free straight-line 
drawing of $G$ on an $x$-periodic regular grid $\mZ/w\mZ\times[0,h]$, where ---with $n$
the number of vertices and $d$ the edge-distance between the two boundaries---  
 $w\leq 2n$ and $h\leq 2n(d+1)$. 
In the drawing, the upper (resp. lower) boundary is a broken line monotone in $x$, formed by segments of slope at most $1$ in absolute value.
\end{theorem}

As a first step we will restrict to the case
with no chordal edge at $\Cin$:

\begin{proposition}\label{prop:triang_no_chord}
For each essentially simple cylindric triangulation $G$ with no chordal edge at $\Cin$, 
one can compute in linear time a crossing-free straight-line drawing of 
$G$ on an $x$-periodic regular grid $\mZ/w\mZ\times [0..h]$
where ---with $n$ the number of vertices of $G$ and $d$ the edge-distance between
the two boundaries--- $w\leq 2n$ and $h\leq n(2d+1)$, such that the upper boundary is a broken line monotone in $x$ formed by segments of slope in $\{+1,-1,0\}$
and the lower boundary is an horizontal line. 
\end{proposition}
To prove Proposition~\ref{prop:triang_no_chord} we will start with the subcase of cylindric simple triangulations.
In that case we will introduce a notion of canonical ordering (where it is necessary that $\Cin$ has no chordal edge), 
which makes it possible to design  an incremental periodic drawing algorithm, which can be seen as the cylindric counterpart of the FPP algorithm. 
Then we will extend the canonical ordering and periodic drawing algorithm to cylindric essentially simple triangulations
with no loop. We will then explain how to deal with (non-contractible) loops. This will establish Proposition~\ref{prop:triang_no_chord}.
Finally Proposition~\ref{prop:triang_no_chord} and handling chordal edges at $\Cin$ 
(as explained in Section~\ref{sec:chords_cin_triang}) will yield Theorem~\ref{thm:triang}.

\subsection{Canonical ordering for cylindric simple triangulations with no chord at $\Cin$.}\label{sec:canonical_triang} 
We introduce at first a notion of canonical ordering (classically studied on plane graphs) 
 for cylindric simple triangulations:

\begin{figure}
\begin{center}
\includegraphics[width=8cm]{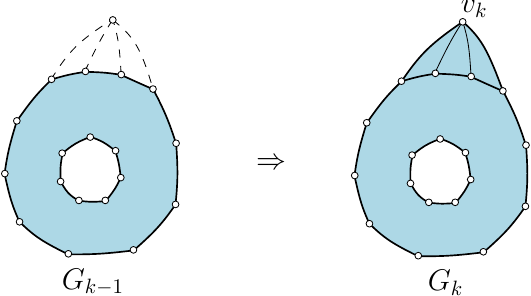}
\end{center}
\caption{From $G_{k-1}$ to $G_k$ in a cylindric canonical ordering (annular representation, only the boundaries and 
next added vertex and added edges are shown).}
\label{fig:one_step_triang}
\end{figure}

\begin{definition}\label{def:cylindricCanonicalOrderingTriang}
Let $G$ be a cylindric simple triangulation with no chordal edge at $\Cin$. 
An ordering $\pi= \{ v_1, v_2, \ldots, v_{n}\}$ of the vertices
of $G\backslash \Cin$ is called a \emph{(cylindric) canonical ordering}  
if it satisfies:
\begin{itemize}
\item
For each $k\in [0..n]$ the map $G_k$ induced by $\Cin$ and by the vertices $\{v_1,\ldots,v_k\}$
is a cylindric (simple) triangulation. The outer boundary-face of $G_k$ is denoted $C_k$. 
\item
For each $k\in [1..n]$, the vertex $v_k$ is on $C_k$, and its neighbours in $G_{k-1}$ are consecutive  on $C_{k-1}$
(see Fig.~\ref{fig:one_step_triang}).
\end{itemize}
\end{definition}
The notion of canonical ordering makes it possible to construct a cylindric
triangulation $G$ (with outer boundary $\Cext$ and inner boundary $\Cin$) 
incrementally, starting from $G_0=\Cin$ and 
adding one vertex at each step. 
This is similar to canonical orderings for planar triangulations, as introduced
by de Fraysseix, Pach and Pollack~\cite{FPP90} (the main difference is that, 
for a planar triangulation, one starts with $G_0$ being an edge,  
whereas here one starts with $G_0$ being
a cycle, seen as a cylindric map without internal faces).  

The computation of such an ordering is done by a shelling procedure similar to the one considered in the planar case~\cite{FPP90,Bre00}. 
At each step the graph formed by the remaining 
vertices  is a cylindric triangulation, the inner boundary remains $\Cin$
all the way, while the outer boundary (initially $\Cext$) has its contour, denoted 
 by $C_k$, 
getting closer to $\Cin$. 
A vertex $v\in C_k$ is \emph{free} if $v$ is
 incident to no chord of $C_k$ and if $v\notin \Cin$. 
 The shelling procedure goes as follows ($n$ is the number
 of vertices in $G\backslash \Cin$): ``for $k$ from $n$ to $1$,
 choose a free vertex $v$ on $C_k$, assign $v_k\leftarrow v$,
 and then delete $v$ together with all its incident edges''.
 The existence of a free vertex at each step follows from the same 
 argument as in the planar case~\cite{Bre00}. 
First, since there is no chord at $\Cin$, then as long as $C_k\neq \Cin$ there is at least one vertex on $C_k\backslash \Cin$. 
If there is no chord for $C_k$,
 then any vertex $v\in C_k\backslash \Cin$ is free.
If there is at least one chord $e=\{u,v\}$ for $C_k$, let $P_e$ be the 
path connecting $u$ and $v$ on $C_k$ such that the cycle $P_e+e$ does not enclose the inner boundary-face
(in the annular representation); and let $d_e$ be the length of $P_e$ (note that $d_e\geq 2$). 
Let $e=\{u,v\}$ be a chord such that $d_e$
is smallest possible. Then any vertex in $P_e\backslash\{u,v\}$ is free. Since
there exists a free vertex at each step, the procedure terminates. 
Let us now justify that the shelling procedure has linear time complexity. 
Note that an outer vertex $v\in C_k\backslash \Cin$ 
is free iff the number $N(v)$ of neighbours of $v$ on $C_k$ equals two.
So we just have to maintain $N(v)$ over all (current) outer vertices and put the free vertices (those for which $N(v)=2$)
in a stack, picking up the top element of the stack at each step. All this can be done in amortized time $O(|E|)$, with $|E|\leq 3n$ the number of edges
of the cylindric triangulation. To sum up:

\begin{figure}[t]
\begin{center}
\includegraphics[width=13cm]{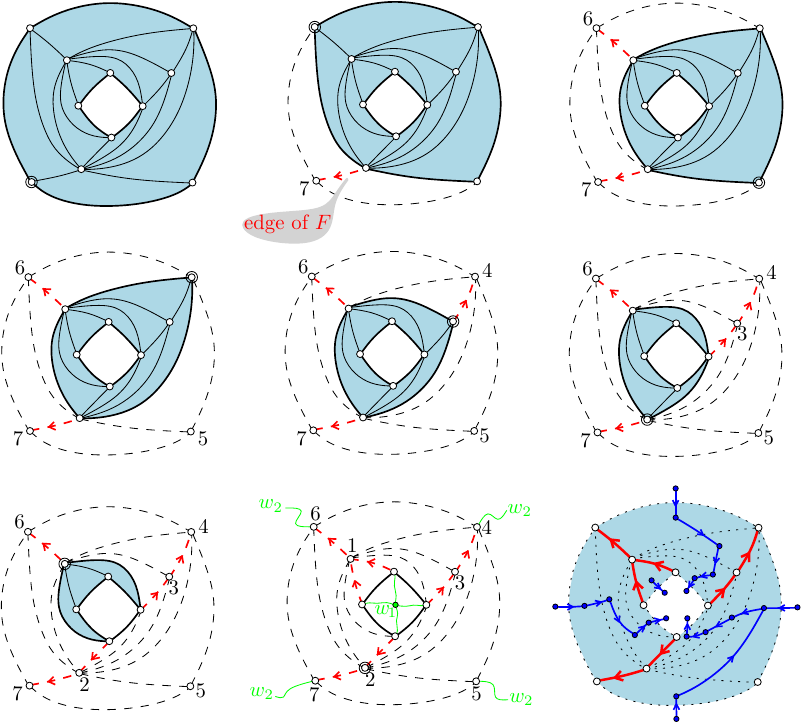}
\end{center}
\caption{Shelling procedure to compute a canonical ordering of a given 
cylindric triangulation (at each step the next shelled vertex is surrounded). 
The underlying forest is computed on the fly; the last drawing
shows the underlying forest superimposed with the dual forest.
The graph is the one of Fig.~\ref{fig:cylindric_map}.
}
\label{fig:shelling_triang}
\end{figure}

\begin{proposition}
Any cylindric simple triangulation $G$ with no chordal edge at $\Cin$ 
admits a (cylindric) canonical ordering that can be computed in linear time by a shelling procedure. 
\end{proposition}

\vspace{.4cm}

\noindent{\bf Underlying forest and dual forest.} 
Given a cylindric simple triangulation $G$ (with no chordal edge at $\Cin$) endowed with a canonical ordering $\pi$, we define  
the \emph{underlying forest} $F$ for $\pi$ as the oriented subgraph of $G$ where each vertex $v\in \Cext$  has outdegree $0$, and where each 
 $v\notin \Cext$ has exactly one outgoing edge, which is
connected to the adjacent vertex $u$ of $v$ of largest label in $\pi$.  
The forest $F$ can be computed on the fly during the shelling procedure: when treating 
an admissible vertex $v_k$, for each neighbour $v$ of $v_k$ such
that $v\notin C_k$, add the edge  $\{v,v_k\}$ to $F$, and orient it from $v$ to 
$v_k$.  
Since the edges are oriented in increasing labels, $F$ is an oriented forest; it spans all vertices of $G$ and has its roots on $\Cext$. 
The \emph{augmented map}  $\widehat{G}$ ($\widehat{G}$ has to be seen as a map on the sphere) is obtained from $G$ by adding a vertex 
$w_1$ inside $\Bin$, a vertex $w_2$ inside $\Bext$, and connecting 
 all vertices around $\Bin$ to $w_1$ and all vertices around $\Bext$ to $w_2$
 (thus triangulating the interiors of $\Bin$ and $\Bext$). 
 We denote by $\widehat{F}$ the forest $F$ 
plus all edges incident to $w_1$ and all edges incident to $w_2$.  
The \emph{dual forest} $F^*$ 
for $\pi$ is defined as the graph formed by the vertices of $\widehat{G}^*$ (the dual of  $\widehat{G}$)  and by the edges of $\widehat{G}^*$ 
that are dual to edges not in $\widehat{F}$. 
Since $\widehat{F}$ is a spanning connected subgraph of $\widehat{G}$, 
$F^*$ is a spanning forest of $\widehat{G}^*$. Precisely, each of the trees (connected components)
of $F^*$ is rooted at a vertex ``in front of'' each edge of $\Cin$, and the  
edges of the tree can be oriented toward this root-vertex,  
see Fig.~\ref{fig:shelling_triang} bottom right.
Each edge $e^*$ of $F^*$ is in a 
certain tree-component $T^*$ rooted at a vertex $v_0$ in front
of a certain edge of $\Cin$. Let $P$ be the path from $e^*$ to $v_0$ in $T^*$; $P$ is   
 shortly called \emph{the path from $e^*$ to the root}.  


\vspace{.4cm}

\subsection{Periodic drawing algorithm (simple triangulations, no chord at $\Cin$).}\label{sec:periodic_draw_triang} 
Given a cylindric simple triangulation $G$ without chord at $\Cin$,  
 we first  compute 
a canonical ordering of $G$, and then draw $G$ in an incremental way.   
We start with a cylinder of width $2|\Cin|$ and height $0$ (i.e.,
a circle of length  $2|\Cin|$) and draw the vertices of $\Cin$ equally spaced 
on the circle: space $2$ between two consecutive 
vertices\footnote{It 
is also possible to start with any configuration of points on a circle
such that any two consecutive vertices are at even distance}. 

Then the strategy for each $k\geq 1$ is to compute the drawing of $G_k$ out of the drawing of $G_{k-1}$.
Note that the set of vertices of $C_{k-1}$ that are neighbours of $v_k$
forms a path on $C_{k-1}$. Traversing this path $\gamma$ with the outer face 
of $G_{k-1}$  to the left, let $e_{\ell}$ be the first edge of $\gamma$
and $e_r$ be the last edge of $\gamma$ (note that $e_{\ell}=e_r$ if $v_k$ has only two neighbours on $C_{k-1}$). 
Let also $a_k$ be the starting vertex 
and let $b_k$ be the ending vertex of $\gamma$. 
Two cases can occur.

\vspace{.2cm}

 (1)~If, in the drawing of $G_{k-1}$ obtained
so far, $\mathrm{slope}(e_{\ell})<1$ and $\mathrm{slope}(e_r)>-1$, then we can directly insert $v_k$ in the drawing.
We place $v_k$ at the intersection of the ray of slope $1$ starting from $a_k$ 
and the ray of slope $-1$ starting from $b_k$, and we connect $v_k$ to all vertices
of $\gamma$ by segments. 

\vspace{.2cm}

(2)~If $\mathrm{slope}(e_{\ell})=1$ or $\mathrm{slope}(e_r)=-1$, then we can not directly insert $v_k$
as done in Case (1), because the edges $e_{\ell}$ and $\{a_k,v_k\}$ would overlap if $\mathrm{slope}(e_{\ell})=1$, or the edges
 $e_r$ and $\{b_k,v_k\}$ would overlap if $\mathrm{slope}(e_r)=-1$. We first have to perform stretching operations (thereby increasing the cylinder width by $2$) 
to make the slopes of $e_{\ell}$ and $e_r$ smaller than $1$ in absolute value. 
Define the \emph{$x$-span} of an edge $e$ in the cylindric drawing as the number of columns $[i,i+1]\times[0,+\infty]$ that meet the interior of $e$
(we have no need for a more complicated definition since, in our drawings, 
 a column will never meet an edge more than once).    
\begin{figure}[t]
\hspace*{-6mm}
\includegraphics[width=13.5cm]{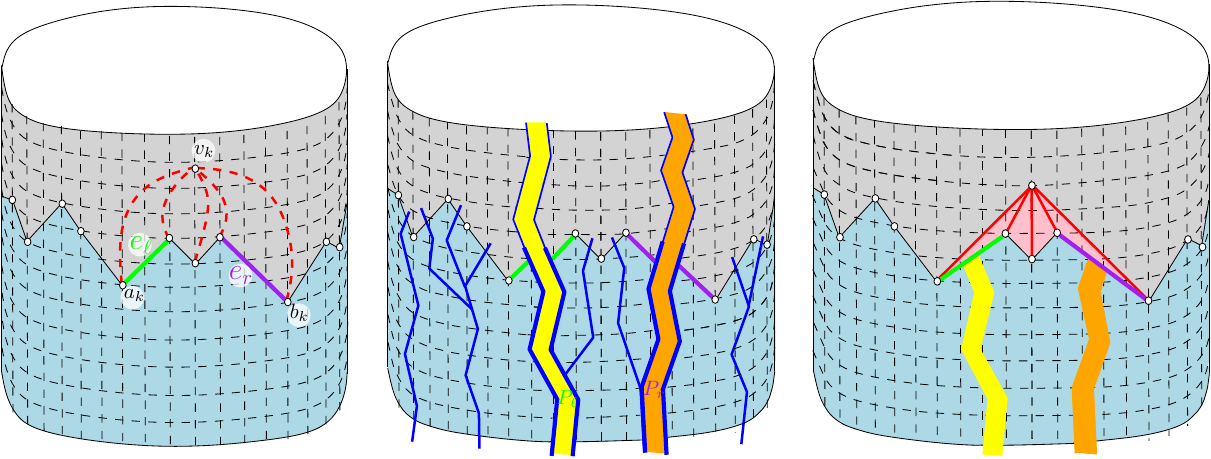}
\caption{One step of the incremental drawing algorithm. 
 Two vertical strips of width $1$ (each one along a path in the dual forest) 
are inserted in order to make 
the slopes of $e_{\ell}$ and $e_r$ smaller than $1$ in absolute value. Then the new vertex
and its edges connected to the upper boundary can be drawn in a planar way.}
\label{fig:one_step}
\end{figure}
Consider the dual forest $F^*$ for the canonical ordering restricted to $G_{k-1}$. 
Let $P_{\ell}$ (resp. $P_r$) be the path
in $F^*$ from $e_{\ell}^*$ (resp. $e_r^*$) to the root.  
We stretch the cylinder by inserting
 a vertical strip of length $1$ along $P_{\ell}$ and another along $P_r$, see Fig.~\ref{fig:one_step}.
 This comes down to increasing by $1$ the $x$-span of each edge of $G_{k-1}$ dual to an edge in $P_{\ell}$,
and then increasing by $1$ the $x$-span of each edge dual to an edge in $P_{r}$ 
 (note that $P_{\ell}$ and $P_r$ are not necessarily disjoint, in which case  
 the $x$-span of an edge dual to an edge in 
  $P_{\ell}\cap P_r$ is increased by $2$).  
%
 After these stretching 
operations,\footnote{In  
the FPP algorithm for planar triangulations, 
the step to make the (absolute value of) slopes of $e_{\ell}$ and $e_r$ smaller than $1$
is formulated as a shift of certain subgraphs described in terms of the underlying forest $F$.
The extension of this formulation to the cylinder would be quite cumbersome.
We find the alternative formulation with strip insertions more convenient for the cylinder. 
In addition it also gives rise to a very easy linear-time implementation (another linear-time implementation of the FPP algorithm
is given in~\cite{ChPa95}).} 
whose effect is to make the slopes of $e_{\ell}$ and $e_r$ strictly smaller than $1$ in absolute value, 
we insert, as in Case (1), the vertex $v_k$ at the intersection of the ray of slope $1$ starting from $a_k$ 
and the ray of slope $-1$ starting from $b_k$, and we connect $v_k$ to all vertices
of $\gamma$ by segments. 

\vspace{.2cm}

Note that in the two cases (1) and (2), the two rays from $a_k$ and $b_k$ actually intersect at a grid point since the Manhattan
distance between any two vertices on $C_{k-1}$ is even. 
 Fig.~\ref{fig:DrawingOnCylinder} shows the execution of the algorithm on the example of Fig.~\ref{fig:shelling_triang}. 


\begin{figure}[tp]
\hspace*{-4mm}
\includegraphics[width=13.18cm]{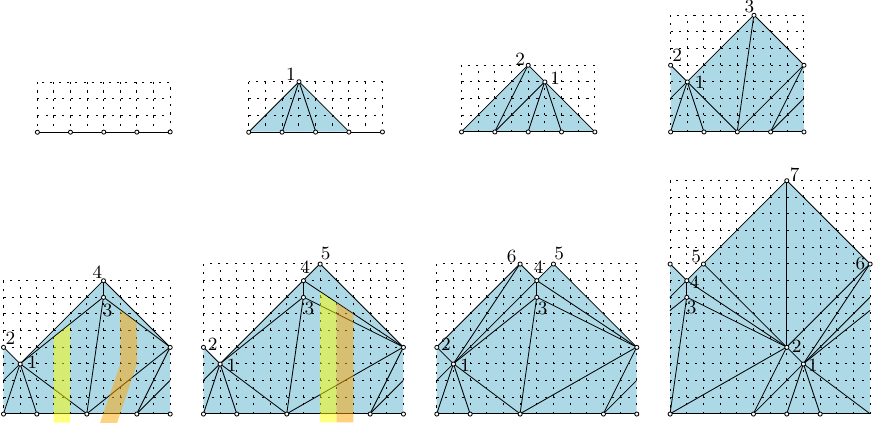}
 \caption{Complete execution of the algorithm computing an $x$-periodic drawing of a cylindric simple triangulation (no chordal edges incident to $\Bin$). The
vertices are treated in increasing label (the canonical ordering is the one computed
in Fig.~\ref{fig:shelling_triang}).}
\label{fig:DrawingOnCylinder}
\end{figure}

%
The fact that the drawing remains crossing-free relies on the fact that all edges of the upper boundary have slope at most $1$ in absolute value,
 and on the following inductive property (similar to the one used in~\cite{FPP90}), which is easily shown
to be maintained at each step $k$ from $1$ to $n$:

\vspace{.2cm}
 
\noindent\hspace{1cm}\begin{minipage}{11cm}
{\bf Pl}: for each edge $e$ on $C_k$ (the upper boundary of $G_k$), 
let $P_e$ be the path in $F^*$ from $e^*$ to the root, 
let $E_e$ be the set of edges dual to edges in $P_e$,  
and let $\delta_e$ be any nonnegative integer.  
Then the drawing remains planar after successively  increasing by $\delta_e$ the $x$-span of all edges of $E_e$, for all $e\in C_k$. 
\end{minipage}

\vspace{.2cm}

We now prove the bounds on the grid-size ($w$ is the width and $h$ the height of the cylinder on which $G$ is drawn).  
If $|\Cin|=t$  then the initial cylinder is $2t\times 0$; and at each vertex insertion, the grid-width grows by $0$ or $2$.
Hence $w\leq 2n$. In addition, due to the slope conditions (slopes of boundary-edges
are at most $1$ in absolute value), the $y$-span (vertical span) of every  edge $e$ 
is not larger than the current width  at the time when $e$ is inserted in the drawing. 
Hence, if we denote by $v$ the vertex of $\Cext$ that is closest (at distance $d$)
from $\Cin$, then the ordinate of $v$ is at most $d\cdot(2n)$.  
And due to the slope conditions, the vertical span of $\Cext$ in the drawing is at most 
$w/2\leq n$.  Hence the grid-height is at most $n(2d+1)$. The linear-time
complexity is shown next.

\vspace{.2cm}

\noindent{\bf Linear-time implementation.}  
An important remark is that, instead of computing the $x$-coordinates and $y$-coordinates
of vertices in the drawing, 
one can compute the $y$-coordinates of vertices and the $x$-span
of edges (as well as the knowledge of which extremity of the edge is the left-end vertex and which extremity is the right-end vertex).  
In a first pass (for $k$ from $1$ to $n$) 
one computes the $y$-coordinates of  vertices and the $x$-span $r_e$ 
of each edge $e\in G$ at the time $t=k$ when it appears on $G_k$ (as well one gets to know which extremity of $e$ is the left-end vertex). 
Afterwards if $e\notin F$,  
 the $x$-span of $e$ might further increase due 
to insertion of new vertices; denote by $s_e$ the total further increase undergone by $e$. 
Note that for each edge $e$ not in $F$,  if $e\notin\Cext$ 
there is a certain step $k$ such that $e\in C_{k-1}$ and $e\notin C_k$.    
Let $w_e\in\{0,1,2\}$ be defined as the stretch (increase of $x$-span) that $e$ undergoes just before adding $v_k$ to the drawing (in case $e\in\Cext$ no such step $k$ exists, and we assign $w_e=0$).
We call $w_e$ the \emph{weight} of $e$ (the quantities $w_e$ can be computed in a first pass, together with the quantities $r_e$). 
Let $P$ be the path from $e^*$ to the root. 
When stretching $e$ (before adding $v_k$), 
all edges dual to edges of $P$ undergo the same stretch 
(by $w_e$).  
In other words, if we denote by $T_e^*$ the subtree of $F^*$ hanging from $e^*$ (including $e^*$), and denote by $W_e$ the total weight of the (dual of the) edges in $T_e^*$, then $s_e=W_e$. 
Hence  the total $x$-span of each edge $e\in G$ is given by $r_e+s_e$,
where $s_e=0$ if $e\in F$ or $e\in \Cext$,
and $s_e=W_e$ if $e\notin F$ and $e\notin\Cext$. 
Since all quantities $s_e$ can easily be computed in linear time from the quantities $w_e$ (starting from the leaves and going
up to the roots of $F^*$),  
this gives a linear-time implementation. 

\vspace{.2cm}

To sum up, we have proved Proposition~\ref{prop:triang_no_chord} for simple cylindric triangulations with no chord at $\Cin$.

\vspace{.2cm}

\begin{remark}\label{rk:stretch_triang}
For each edge $e$ of $\Cin$,
let $r_e$ be the initial horizontal stretch of $e$ in the drawing 
procedure 
 (an even number, classically $r_e=2$ to have a compact drawing).
And let $t_e$ be the final horizontal stretch in the drawing procedure. The vectors $R=(r_e)_{e\in \Cin}$ and $T=(t_e)_{e\in \Cin}$
are called \emph{initial-stretch} and \emph{final-stretch} vectors relative to the drawing of $G$ (endowed with a given canonical ordering). 
Then the vector $S:=T-R$ is an invariant (does not depend on $R$), because $s_e=t_e-r_e$ just depends on the underlying forest (and dual forest) given by the canonical ordering.  
Hence, if with $R$ as initial stretch-vector we obtain a drawing with final-stretch vector $T$,  
then for any vector $T'$ of the form $T+2V$ ---with $V$ a vector of non-negative integers---
we can obtain a drawing with final-stretch vector $T'$ (by taking $R'=R+2V$ as initial-stretch vector instead of~$R$). 
\end{remark}

\begin{remark}\label{rk:recover_FPP}
Note that our algorithm can be seen as an extension of  the FPP algorithm, which works for simple planar quasi-triangulations (i.e., simple graphs embedded
in the plane with triangular inner faces and a polygonal outer face). If we are given a simple planar quasi-triangulation $Q$, we can
turn it into a cylindric simple triangulation $G$ by adding a vertex of degree $2$ connected to the two ends of the root-edge. 
Then the FPP drawing of $Q$ is recovered from the periodic drawing
of $Q$ upon deleting the added vertex, see Fig.~\ref{fig:FPP}. 
\end{remark}

\begin{figure}
\begin{center}
\includegraphics[width=12cm]{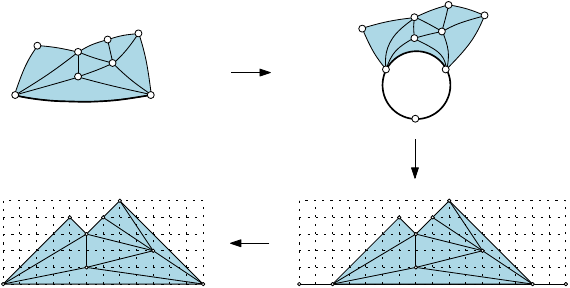}
\end{center}
\caption{The FPP algorithm for simple planar quasi-triangulations is recovered from our algorithm by adding a vertex of degree $2$ to complete
the inner boundary.}  
\label{fig:FPP}
\end{figure}

\subsection{Allowing for non-contractible 2-cycles (no chord at $\Cin$).}
The method (canonical ordering and incremental drawing algorithm) is easily extended to essentially simple cylindric triangulations
with no loop but possibly with non-contractible 2-cycles. Let $G$ be such a cylindric map with no chord at  $\Cin$. The definition of canonical ordering
for $G$ is exactly the same as for simple cylindric triangulations, adding the possibility that $G_k$ is obtained from $G_{k-1}$
as shown in Fig.~\ref{fig:step2}. 
\begin{figure}[t]
\begin{center}
\includegraphics[width=8cm]{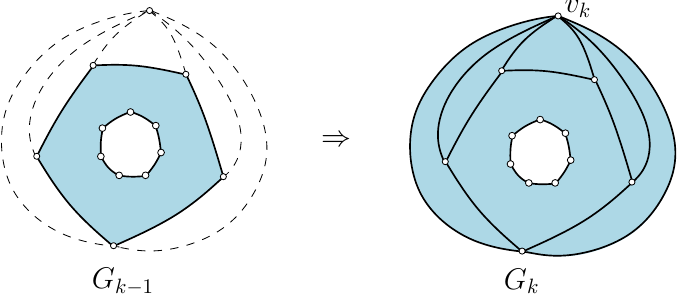}
\end{center}
\caption{When 2-cycles are allowed, the additional case shown here might occur
for the transition from $G_{k-1}$ to $G_k$.} 
\label{fig:step2}
\end{figure}
Such a canonical ordering can be computed by a shelling procedure that extends the one of Section~\ref{sec:canonical_triang}.
Call a $2$-cycle \emph{internal} if the two incident vertices are not both on the outer boundary.  
This time, a vertex on the outer boundary and not on the inner boundary is called \emph{free} if it is not incident to a chord nor incident to an internal $2$-cycle.

The shelling procedure consists in choosing a free vertex at each step, and deleting it together with its incident edges, until there just remains the inner boundary.
The existence of a free vertex, when the cylindric map is not reduced to a cycle, is proved as follows. 
First, since there is no chord at $\Cin$, there is at least one vertex on $\Cext\backslash \Cin$. 
If there is no chord nor internal 2-cycle incident to a vertex on $\Cext$, then any vertex on $\Cext\backslash \Cin$ is free. 
If there is at least one chord at $\Cext$, for each chord $e$ at $\Cext$ let $P_e$ be the path on $\Cext$ such that $P+e$ does not enclose $\Bin$, and let $d_e$ be the length of $P_e$. 
Let $e=\{u,v\}$ be a chord at $\Cext$ such that $d_e$ is smallest possible. Then any vertex $v\in P_e\backslash\{u,v\}$ is admissible. 
If there is no chord but there is at least one internal 2-cycle, consider the largest internal 2-cycle 
(in terms of containment; note that the 2-cycles are nested in the
annular representation). 
Then at least one outer vertex $v$ is strictly exterior to this $2$-cycle, hence is free (since we assume there is no chord). 

A linear time implementation is also readily obtained by maintaining, for each outer vertex, 
how many neighbours on $\Cext$ it has and how many internal 2-chords it is incident to. 
Note that such a canonical ordering also induces an underlying forest $F$ and an underlying dual forest $F^*$. These can be computed
on the fly during the shelling procedure, in the same way as for simple cylindric triangulations.  
Finally, the incremental drawing algorithm (and linear implementation using the dual forest) 
works exactly in the same way as for simple triangulations. An example is shown in Fig.~\ref{fig:shelling2}. 
The grid bounds are also the same as for simple triangulations (the arguments to obtain the bounds in the simple case did not use the fact
that there are no 2-cycles). So this gives Proposition~\ref{prop:triang_no_chord} for essentially simple triangulations with no loops.
Finally, note that Remark~\ref{rk:stretch_triang} still holds here (the arguments are the same).  

\begin{figure}
\begin{center}
\includegraphics[width=13cm]{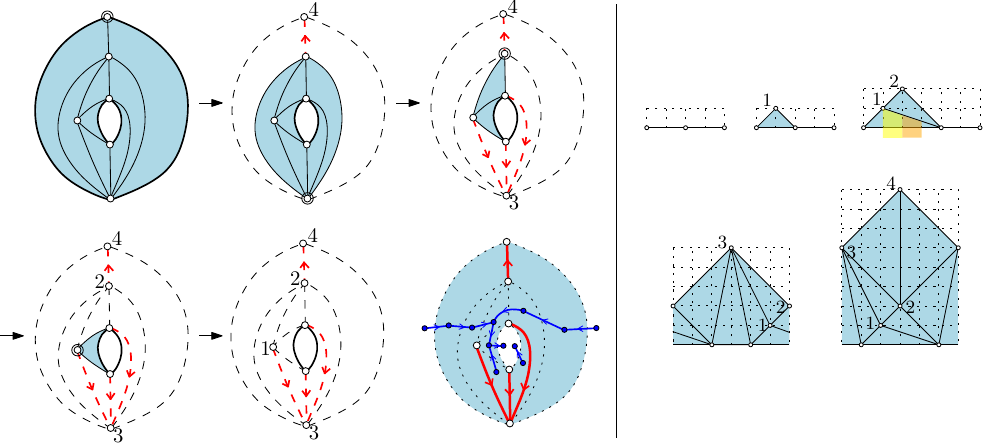}
\end{center}
\caption{Left-side: the shelling procedure for an essentially simple loopless cylindric triangulation $G$ 
with no chord at $\Cin$; the last drawing shows the underlying forest and dual forest.
Right-side: the incremental drawing algorithm.} 
\label{fig:shelling2}
\end{figure}

\vspace{.2cm}

\subsection{Allowing for non-contractible loops (no chord at $\Cin$).}
We finally explain how to deal with non-contractible loops. Our strategy is not to extend the notion of canonical ordering
but simply to decompose (at loops) such a cylindric map into a ``tower'' of components,
where the only loops in each component are at the boundary-faces. 
Let $G$ be an essentially simple cylindric triangulation with $n$ vertices and at least one loop. 
There are a few cases to treat:

\vspace{.2cm}

\emph{(a) $\Cin$ is the unique loop of $G$.} In that case, the algorithm of Section~\ref{sec:canonical_triang} (canonical ordering, shelling procedure, 
and incremental drawing procedure) works in the same way, see Fig.~\ref{fig:draw_one_inn_loop} for an example. Let $2m$ be the width of the drawing. 
By the arguments of Remark~\ref{rk:stretch_triang}, for any $m'\geq m$, $G$ has a periodic drawing of width $2m'$ and height at most $m'(2d+1)$, with $d$ 
the edge-distance between the two boundaries.

\begin{figure}
\begin{center}
\includegraphics[width=10cm]{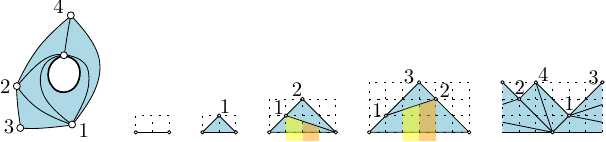}
\end{center}
\caption{Drawing algorithm when the inner boundary is the unique loop.} 
\label{fig:draw_one_inn_loop}
\end{figure}

\vspace{.2cm}

\emph{(b) $\Cext$ is a loop, $\Cin$ is possibly a loop, and there are no other loops.} Assume $\Cext$ is a loop and $G$ is not
reduced to that loop (i.e., $\Cext\neq \Cin$). Let $u$ be the vertex incident to the loop
(note that $u$ is not on $\Cin$ since there is no chord at $\Cin$), 
and let $c$ be the innermost 
2-cycle  incident to $u$. Cutting along $c$ (see Fig.~\ref{fig:draw_one_ext_loop}), we obtain two components: a planar triangulation $T$ and a cylindric
essentially simple triangulation $G'$ such that: $G'$ has no loop except possibly at $\Cin$, $G'$ has outer degree $2$ and $u$
is a free vertex for $G'$. Hence there is a canonical ordering for $G'$ such that $u$ is the first shelled vertex. 
Take a periodic drawing of $G'$ (for this canonical ordering) and take an FPP drawing of $T$. 
The widths of the respective drawings are even numbers, denoted $2n_1$ and $2n_2$. Let $m=\mathrm{max}(n_1,n_2)$. 
By the arguments of Remark~\ref{rk:stretch_triang} it is possible to redraw the graph that has the smaller grid-width so that it gets width $2m$, so that both drawings are of width $2m$. 
Then the drawing of $T$ (taken upside down) fits into the upper boundary of $G'$ yielding a periodic drawing of $G$ of width $2m$, see Fig.~\ref{fig:draw_one_ext_loop}.
The height of the drawing is at most $2dm\leq 2dn$, where $d$ is the edge-distance between the two boundaries (indeed, in the usual bound $h\leq (2d+1)m$, 
the $+1$ in the parenthesis is due to the vertical extension of the upper boundary, which is $0$ here). 

\begin{figure}
\begin{center}
\includegraphics[width=13cm]{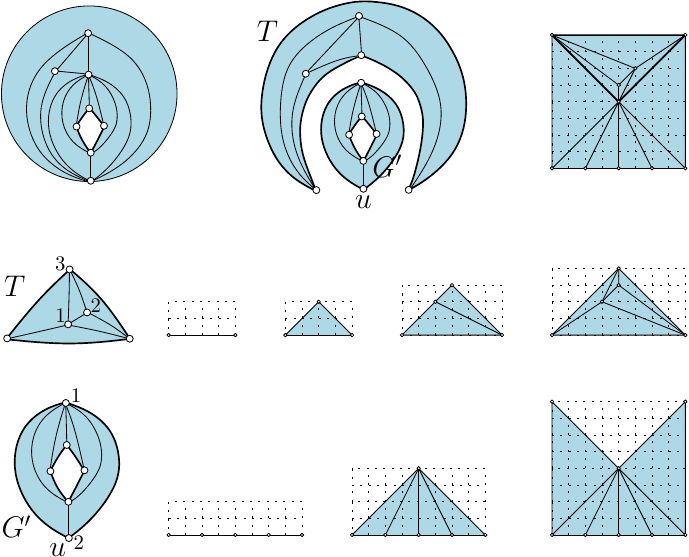}
\end{center}
\caption{Drawing algorithm when the outer face contour is a loop
(and there is no other loop except possibly at $\Cin$): 
the map is split along the innermost 2-cycle (the initial $x$-span of $T$ is taken to 
be $4$ instead of $2$ so that the drawings of $Q$ and $G'$ fit together).} 
\label{fig:draw_one_ext_loop}
\end{figure}

\vspace{.2cm}

\emph{(c) General case (without chord at $\Cin$).} We can assume there is at least one loop (the 
case with no loop has been covered in the last section). 
 Let $\ell_1,\ldots,\ell_r$ be the sequence of nested loops of $G$ (with $\ell_1$ the innermost loop
and $\ell_r$ the outermost loop) and let $G^{(0)},\ldots,G^{(r)}$
be the $r+1$ components that result from cutting successively along all these loops. 
For $i\in [0..r]$ let $d_i$ be the edge-distance between the two boundaries in $G^{(i)}$; and let $d$ be the edge-distance between the 
two boundaries of $G$. Note that $d=\sum_id_i$.  
Each component $G_i$ has  loops only at the 
 boundary-face contours, hence has a periodic drawing (according to cases (a) and (b)) 
such that boundaries that are loops 
are drawn as horizontal lines. Let $2n_1,\ldots,2n_r$ be the widths of the drawings of $G^{(1)},\ldots,G^{(r)}$ thus obtained. 
Let $m=\mathrm{max}(n_1,\ldots,n_r)$. By the arguments of Remark~\ref{rk:stretch_triang}, each of the graphs $G^{(i)}$ can be redrawn so as to have width $2m$. 
Stacking up all these drawings we obtain a periodic drawing of $G$ of width $2m$, see Fig.~\ref{fig:stack_loops}. 
Regarding the grid size, the width is $2m$, with clearly $m\leq n$, and the 
height of the drawing of $G_i$ is at most $2md_i$ for $i\in [0..r-1]$ and at most $m(2d_r+1)$ for $i=r$.  Hence the total height is at most $m(2d+1)\leq n(2d+1)$. 
This establishes Proposition~\ref{prop:triang_no_chord}.

\begin{figure}
\begin{center}
\includegraphics[width=12cm]{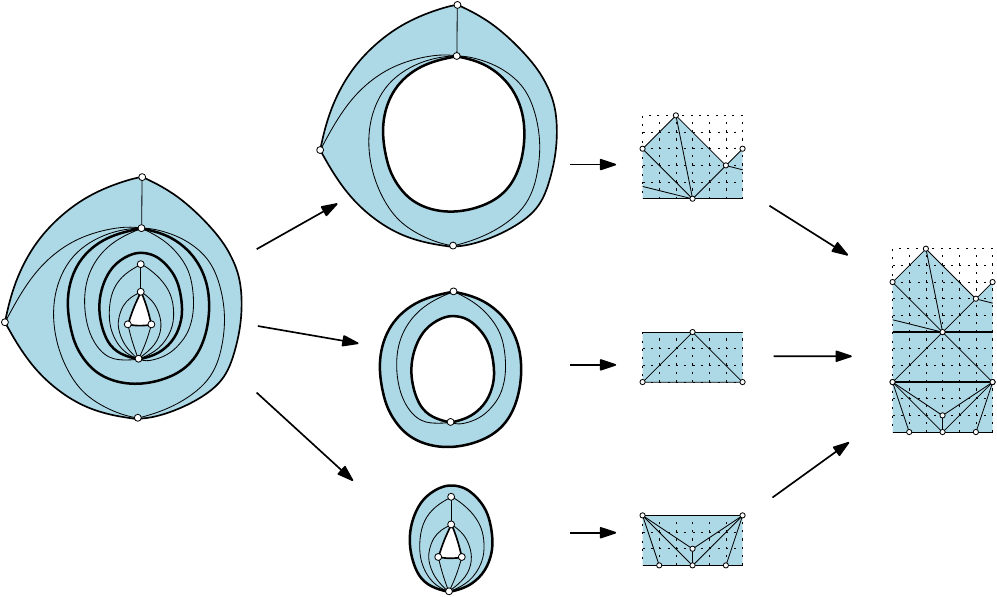}
\end{center}
\caption{Drawing an essentially simple triangulation $G$ (no chord at $\Cin$); $G$ is first decomposed
at its loops; each component is redrawn so that the component-drawings have the same width, and  
can be 
stacked up to obtain a periodic drawing of $G$.}
\label{fig:stack_loops}
\end{figure}

\vspace{.2cm}

\subsection{\bf Allowing for chords at $\mathbf{\Cin}$.}\label{sec:chords_cin_triang} 
We finally explain how to draw a cylindric essentially simple triangulation when 
  allowing for chords incident to $\Cin$.
It is good  to view $\Bext$ as the top boundary-face and $\Bin$ as
the bottom boundary face (and imagine a standing cylinder). 
For each chord  $e$ at the cycle $\Cin$, the \emph{component under $e$}, denoted $Q_e$,  
is the face-connected part of $G$ that lies below $e$; such 
a component is a quasi-triangulation (polygonal outer face, triangular inner faces)
rooted at the edge $e$.  
A chordal edge $e$ of $\Cin$ is \emph{maximal} if the component $Q_e$ under $e$
is not strictly included in the component under another chord at $\Cin$.
The \emph{FPP-size} $|e|$ of such an edge $e$ is defined as 
the width of the FPP drawing of $Q_e$.  
If we delete the component under each maximal chordal edge (i.e.,
delete everything from the component except for the chordal edge itself)
we get a new bottom cycle $C_0'$ that is chordless, so 
we can draw the reduced cylindric triangulation $G'$ using the algorithm
of Proposition~\ref{prop:triang_no_chord}. Let $w_e$ be the width 
of each edge $e$ of $C_0'$ in this drawing. 
According to Remark~\ref{rk:stretch_triang}, we can redraw $G'$ such that each edge $e\in C_0'$ that is chordal in $G$ 
has width $\ell(e)$, with $\ell(e)$ defined
as the smallest integer that is at least $\mathrm{max}(w_e,|e|)$ and such that $\ell(e)-w_e$ is even
(note that $\ell(e)\leq\mathrm{max}(w_e,|e|+1)$).  

\begin{figure}[t]
\begin{center}
\hspace{-.2cm}\includegraphics[width=12.4cm]{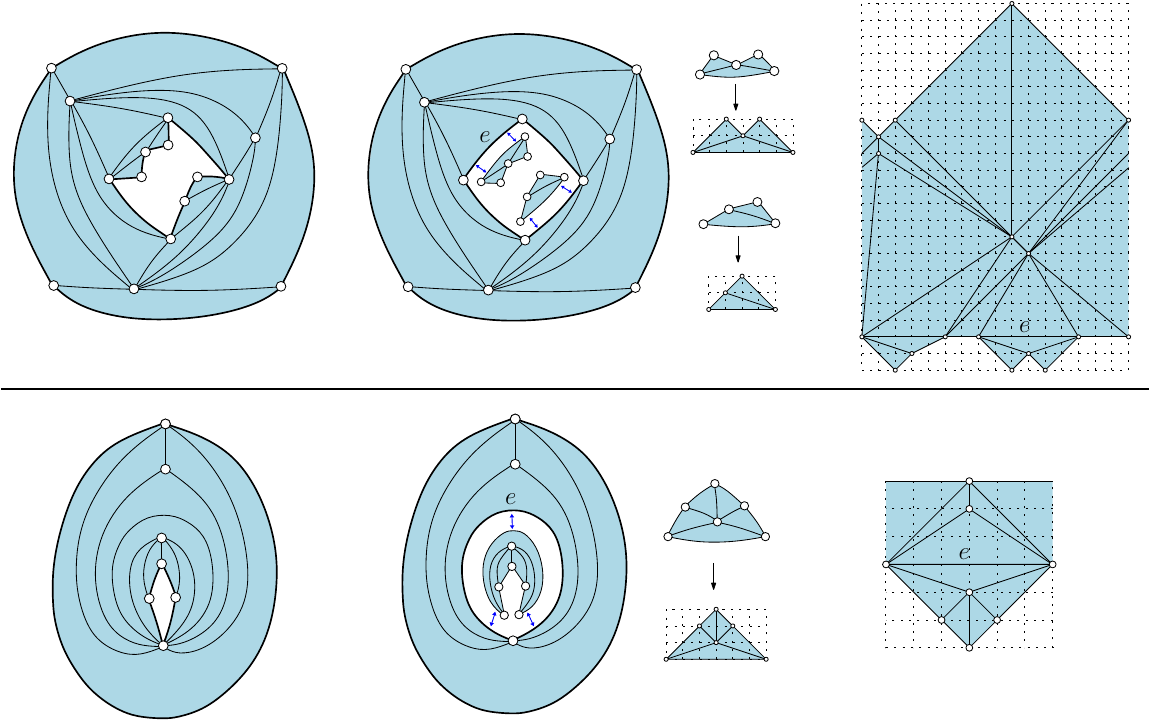}
\end{center}
 \caption{Drawing a cylindric triangulation with chords at $\Cin$ (the top-line example is simple, the 
bottom-line example is essentially simple with loops and 2-cycles in the annular representation). 
In the top-line example, to make enough space
to place the component under $e$, one takes $6$ (instead of $2$) as the initial $x$-span of $e$.
In the bottom-line example, to make enough space
to place the component under $e$, one takes $4$ (instead of $2$) as the initial $x$-span of $e$.
}
\label{fig:DrawingWithChords}
\end{figure}

Then for each maximal chord $e$ of $C_0$, we draw the component $Q_e$
under $e$ using the FPP algorithm. 
This drawing has width $|e|$, with  
$e$ as horizontal bottom edge of length $|e|$ and with the other
outer edges of slopes in $\pm 1$. We shift the left-extremity of $e$ to the left so that
the drawing of $Q_e$ 
gets width $\ell(e)$, then we rotate the drawing of $Q_e$ by $180$ degrees
and plug it into the drawing of $G'$, see Fig.~\ref{fig:DrawingWithChords}.      
The overall drawing of $G$ is clearly planar. 

We now give bounds on the grid-size of the overall drawing. 
Let $S$ be the sum of the FPP-sizes over all maximal chords $e$ at $\Cin$, and let $n'$ be the number of vertices of $G'$.
Clearly the width $w$ of the drawing of $G$ satisfies $w\leq 2n'+\sum_{e\in C_0'}\ell(e)-w_e\leq 2n'+S$.  
For each maximal chord $e$ at $\Cin$ let $n_e+2$ be the number of vertices of the component $Q_e$ under $e$.
Let $N$ be the sum of the quantities $n_e$ over all maximal chords at $\Cin$, so that $n=n'+N$. Since the FPP drawing of a quasi-triangulation with $p\geq 3$ vertices
has width at most $2p-4$, we have $|e|\leq 2n_e$ for each maximal chord at $\Cin$.  Hence $S\leq 2N$, so that $w\leq 2n$. 
Regarding the height of the drawing, by the same arguments as in Section~\ref{sec:periodic_draw_triang}, 
the height of the drawing of $G'$ is at most $n(2d+1)$, with $d$ the edge-distance
between the two boundaries. After adding the components under the chords at $\Cin$, the lower boundary is not horizontal anymore, but since
it is made of segments of slope at most $1$ in absolute value, it has vertical extension at
most $w/2\leq n$. Hence the overall height of the drawing is at most $n(2d+1)+n=2n(d+1)$. 
This finally yields 
the result pursued in this section, Theorem~\ref{thm:triang}.



\section{Periodic drawing of $3$-connected maps on the cylinder}\label{sec:cyl3conn}
We now extend the results obtained in Section~\ref{sec:cylindric_drawing} to the more
general 3-connected case. The approach is completely parallel to the one used in Section~\ref{sec:cylindric_drawing}, 
but the arguments at each step are more 
technical.\footnote{Similarly 
 in the planar case, the canonical ordering
and drawing algorithm in the 3-connected case, as developed by Kant~\cite{Kan96}, are more 
involved than the canonical ordering and drawing algorithm in the triangulated case, as developed
in the FPP algorithm~\cite{FPP90}.} 

\subsection{Definitions and statement of the result}

Let $G$ be a cylindric map, 
 embedded in the plane using the annular representation
(with $\Bext$ as the outer face and $\Bin$ as the 
marked inner face). 
We define a \emph{1-separating curve}
as a closed curve $\gamma$ not meeting any edge 
and intersecting $G$ in exactly one vertex,
 the unique visited face being an internal face,  
and such that the area enclosed by $\gamma$ contains at least one edge. Such a curve $\gamma$ is called \emph{non-contractible}  
of the area enclosed by $\gamma$ entirely contains $\Bin$. 
We now 
define a \emph{2-separating curve} as a closed curve
$\gamma$ intersecting $G$ in two vertices (not meeting any edge),
visiting exactly two faces, and such that the area enclosed by
 $\gamma$ strictly contains at least one vertex; again $\gamma$ is 
said to be \emph{non-contractible} if it entirely 
encloses $\Bin$. We also allow for the degenerate situation 
of a contractible 2-separating curve where the two
incident vertices are equal (it is still required
that the enclosed area strictly contains at least one vertex),  
see Fig.~\ref{fig:separating_curves} for an example.   
Two 1-separating curves (resp. two 2-separating curves)  are considered as equivalent if they are isotopic. It is convenient ---and will
always 
be assumed from now on--- to discard non-contractible 
 curves passing by the outer face. 

\begin{figure}
\begin{center}
\includegraphics[width=8cm]{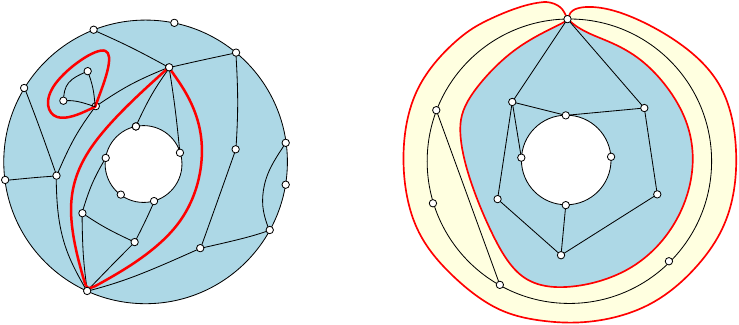}
\end{center}
\caption{Left: a cylindric map $G$ with a (contractible) 1-separating curve and a
 (non-contractible and nondegenerate) 2-separating curve (in bold line).
 Right: a cylindric map with a contractible degenerate 2-separating
curve (the enclosed area appears in yellow). 
\label{fig:separating_curves}}
\end{figure}

We consider in this section certain cylindric maps $G$ with some marked vertices on $\Cext$ and some marked vertices on $\Cin$ which are called \emph{active} 
(if a vertex is on $\Cin\cap \Cext$ it might be active for $\Cin$ or $\Cext$, or both, or none). As we will see, when all 
active vertices are at $\Cext$,   
these vertices are 
to be the ones that are allowed to be selected during 
the shelling procedures to compute a canonical ordering
in the more general 3-connected case presented here. 
  The terminology of active vertices will also be useful
when presenting the drawing algorithm for 
toroidal 3-connected maps via a reduction to the cylindric case.


A cylindric map $G$ with some active vertices is called \emph{internally $3$-connected} if is has  no 1-separating curve, 
and any 2-separating curve is contractible (and nondegenerate) 
and strictly 
encloses at least one active vertex. 
And $G$ is called \emph{essentially internally $3$-connected} if there is   
no contractible 1-separating curve,  and 
any contractible 2-separating curve (possibly degenerate) strictly encloses  at least one active vertex. 
Being essentially internally $3$-connected can also be conveniently
characterized in the $x$-periodic representation $\hat{G}$ of $G$:
a \emph{1-separating} (resp. \emph{2-separating}) 
curve in $\hat{G}$ is a simple closed curve not meeting
 any edge of $\hat{G}$, 
meeting $\hat{G}$ at exactly 1 (resp. 2) vertex, and whose interior
strictly contains at least one vertex. Then $G$ is 
essentially internally 3-connected iff, in $\hat{G}$, there
is no 1-separating curve and any 2-separating curve 
strictly encloses at least one active vertex.

For a cylindric map $G$ with active vertices, a periodic straight-line drawing of $G$ is called \emph{convex} if all corners have angle at most $\pi$, except possibly for 
corners of $\Bin$ (resp. $\Bext$) at an active vertex for $\Cin$ (resp. for $\Cext$), whose
angle in the drawing can be larger than $\pi$.    
For a cylindric map $G$, the \emph{face-distance} between the two boundaries is the minimal possible integer $q$ such that there is a 
 curve in $G$ starting from a vertex of $\Cin$, ending at a vertex of $\Cext$, not 
meeting any edge, 
and passing by $q$ (internal) faces of $G$. 
The main result obtained in this section is the following:

\begin{theorem}\label{thm:triconn_cyl}
For each essentially internally $3$-connected cylindric map $G$ with at least one active vertex, one can compute in linear time
a periodic convex 
drawing of $G$ on an $x$-periodic regular grid $\mZ/w\mZ\times[0,h]$, where ---with $n$
the number of vertices and $d$ the face-distance between the two boundaries---  
 $w\leq 2n$ and $h\leq 2n(d+1)$. 
In the drawing, the upper (resp. lower) boundary is a broken line monotone in $x$ formed by segments of slope at most $1$ in absolute value.
\end{theorem}

As a first step we will prove the result
when there is no active vertex on $\Cin$:

\begin{proposition}\label{prop:triconn_cyl_first}
For each essentially internally $3$-connected cylindric map $G$ with at least one active vertex on $\Cext$ and no active vertex on $\Cin$, 
one can compute in linear time a periodic convex 
drawing of $G$ on an $x$-periodic regular grid $\mZ/w\mZ\times[0,h]$, where ---with $n$
the number of vertices and $d$ the face-distance between the two boundaries---  
 $w\leq 2n$ and $h\leq n(2d+1)$. 
In the drawing, the upper boundary is a broken line, monotone in $x$, formed by segments of slope at most $1$ in absolute value, and the lower
boundary is an horizontal line.
\end{proposition}

Note that Theorem~\ref{thm:triconn_cyl} and Proposition~\ref{prop:triconn_cyl_first} are respectively extensions of Theorem~\ref{thm:triang} and 
Proposition~\ref{prop:triang_no_chord}.   
Indeed, for an essentially simple cylindric triangulation $G$, making active all boundary-vertices of $G$ yields an essentially internally $3$-connected cylindric map,
and if $G$ has no chord at $\Cin$, then making active all vertices of the outer boundary yields an essentially internally $3$-connected cylindric map with no 
active vertex at $\Cin$. In addition, for an essentially simple cylindric triangulation, the face-distance between the two boundaries 
coincides with the edge-distance between the two boundaries. 
To prove Proposition~\ref{prop:triconn_cyl_first} (the strategy is parallel to the one we have followed
to prove Proposition~\ref{prop:triang_no_chord} for cylindric triangulations) 
we will start with the subcase where $G$ is internally $3$-connected. 
In that case we will introduce a notion of canonical ordering (which extends both the canonical ordering for cylindric  
triangulations introduced in Section~\ref{sec:canonical_triang}, and the canonical ordering for internally $3$-connected plane graphs introduced
by Kant~\cite{Kan96}).    
This makes it possible to design  an incremental periodic drawing algorithm, which is the cylindric counterpart (and extension) of the algorithm
introduced by Kant~\cite{Kan96} in the planar case  (which itself extends the FPP algorithm to $3$-connected planar maps).
Then we will extend the canonical ordering and drawing algorithm to the subcase where there is no $1$-separating curve;     
after which we will explain how to deal with $1$-separating curves. This will establish Proposition~\ref{prop:triconn_cyl_first}.
We will then explain how to deal with active vertices at $\Cin$. This will yield Theorem~\ref{thm:triconn_cyl}.

\subsection{Restatement of the definitions in terms of the corner-map}
We provide here a classical reformulation of the 3-connectedness conditions in terms of the 
so-called corner-map, which provides a more combinatorial way of viewing 2-separating curves.  
Given a cylindric map $G$ (whose vertices are considered as white), in its annular representation,
 the \emph{corner-map} $S$ of $G$ is obtained by inserting a black vertex $v_f$ 
in each internal face $f$ 
of $G$ and connecting $v_f$ to all the corners around $f$; $S$ is the graph made of black and white
vertices and of the (newly added) edges between black and white vertices. The \emph{completed map} $\widehat{G}$ of $G$ is defined as $G$ superimposed with $S$, see Fig.~\ref{fig:corner_map}
for an example.  
A \emph{separating 4-cycle} in $S$ is a 4-cycle containing 
at least one vertex in its interior; it is \emph{contractible}
if it encloses $\Bin$.  

\begin{figure}
\begin{center}
\includegraphics[width=13cm]{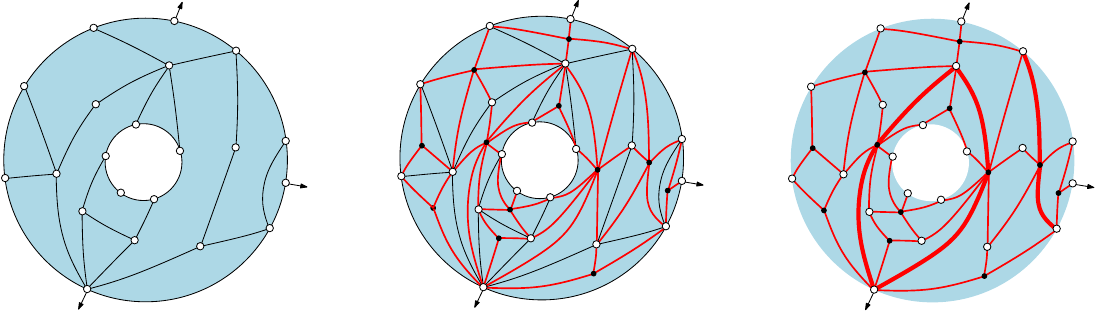}
\end{center}
\caption{Left: an essentially internally 3-connected cylindric map $G$. 
Right: the associated corner-map $S$ 
(in bolder line, 
a (non-contractible) separating 4-cycle, which corresponds to a 2-separating curve of $G$ passing by two internal faces,
and a (contractible) separating 2-chord. Middle: the completion map $\widetilde{G}$, which is obtained by superimposing $G$ and~$S$.}
\label{fig:corner_map}
\end{figure}

We define (in $\wh{G}$) a \emph{2-chord} $\gamma$ for $\Cext$ 
as a path $e_1,e_2$ of length two in $S$ starting from a 
vertex $u$ of $\Cext$ and ending at a vertex $v$ of $\Cext$. We denote by $P_{\gamma}$ the path from $u$ to $v$ on $\Cext$ such that the cycle $C_{\gamma}:=P_{\gamma}\cup\gamma$
does not contain $\Bin$ (if $u=v$ then $P_{\gamma}$ is taken as the whole contour of $\Cext$); $C_{\gamma}$ is called the \emph{cycle enclosed by the 2-chord}. 
The 2-chord $\gamma$ is called \emph{separating}
if $P_{\gamma}$ is of length larger than $1$, i.e., has at least one internal vertex. The internal
vertices of $P_{\gamma}$ are said to be \emph{enclosed} by $\gamma$. 
We also allow for a \emph{degenerate} situation where $u=v$
(corresponding to a degenerate 2-separating curve passing 
by the outer face), 
in which case $P_{\gamma}$ is taken as the whole contour of $\Cext$. 

Similarly one
defines a \emph{2-chord} for $\Cin$ (and the related notions). 
We now characterize  the conditions of being
internally (resp. essentially internally) 3-connected in terms of the corner-map and completed map. 
 In terms
of the corner-map, a cylindric map $G$ where $\Cext$ and $\Cin$  do not meet 
is internally 3-connected iff $S$ has no 2-cycle
nor separating 4-cycle, and any separating 2-chord $\gamma$ at $\Cext$ (resp. at $\Cin$)  is nondegenerate and 
encloses at least one active vertex for $\Cext$ (resp. for $\Cin$). 
If $\Cext$ meets $\Cin$, we define an intersection-vertex as a vertex of $\Cext\cap\Cin$; 
$G$ can be seen as a cyclic sequence of elementary blocks  which
are attached along the intersection-vertices. Such elementary blocks are called the \emph{portions}
 of $G$, and the two intersection-vertices delimiting the portion are called the \emph{extremal vertices} of the portion. A portion is said to be \emph{non-trivial} if it is not reduced to an edge. 
If $\Cext$ meets $\Cin$, then $G$ has to satisfy the further property that each non-trivial
portion has at least one non-extremal vertex that is active (either for $\Cext$ or for $\Cin$). 

We now state the conditions for ``essentially internally 3-connected''. 
When $\Cext$ does not meet $\Cin$, a cylindric map $G$
is essentially internally 3-connected iff any 2-cycle or separating 4-cycle of $S$ encloses $\Bin$,
no such 2-cycles can meet, and any separating  2-chord $\gamma$ at $\Cext$ (resp. at $\Cin$)   
encloses at least one active vertex for $\Cext$ (resp. for $\Cin$). And when $\Cext$ meets $\Cin$
one can similarly consider the \emph{portions} of $G$, and one has the further requirement
that any non-trivial portion has at least one non-extremal vertex that is active (either for $\Cext$ or for $\Cin$).

\subsection{Canonical ordering}\label{sec:canonical_3conn} 
We first introduce a notion of canonical ordering 
 for internally $3$-connected cylindric maps. Before that, we need a bit of terminology. Given a pair $(u,w)$ of outer active vertices,
 the \emph{outer path for $(u,w)$} is the path, denoted $\gamma(u,w)$, from $u$ to $w$ on the outer face contour, having the outer face on its left. 
The pair $(u,w)$ is called \emph{consecutive} if there is no active vertex in $\gamma\backslash\{u,w\}$. 
\begin{definition}\label{def:cylindricCanonicalOrdering3conn}
Let $G$ be an internally $3$-connected cylindric map with no active vertex for $\Cin$ and with at least one active vertex for $\Cext$.   
A \emph{canonical ordering} for $G$ is a growing sequence $G_0, G_1,\ldots,G_p$ of internally $3$-connected cylindric maps such that:
\begin{itemize}
\item
Initially, $G_0=\Cin$, at the end $G_p=G$. All along, the inner boundary-face of $G_k$ is $\Bin$. The outer boundary 
of $G_k$ is denoted $C_k$. 
\item
For $k\in[0..p]$, the active vertices on $C_k$ are those with at least
one neighbour in $G\backslash G_k$ and those that are on $\Cext$ 
and are active (for $G$).\footnote{With  
these conditions it is easy to check that, since  
$G$ has at least one active vertex for $\Cext$, then $G_k$ must have at least one active vertex for 
$C_k$.}
\item
For $k\in[1..p]$, $G_k$ is obtained from $G_{k-1}$ either by:
\begin{itemize}
\item
choosing a non-consecutive pair $(u,w)$ of distinct active vertices on $C_{k-1}$ 
and connecting all active vertices on $\gamma(u,w)$ to a newly added vertex in the outer face (see Fig.~\ref{fig:step_canonical_3conn}(a)); 
\item
or choosing a consecutive pair $(u,w)$ of distinct active vertices on $C_{k-1}$ and adding a path of at least two edges in the outer face connecting these two vertices 
(see Fig.~\ref{fig:step_canonical_3conn}(b)).
\end{itemize}
\item
Additionally, for $k\in[1..p]$, the vertices of $G_k\backslash G_{k-1}$ must be active (for $G_k$).
\end{itemize}
\end{definition}

\begin{figure}[t]
\begin{center}
\includegraphics[width=12cm]{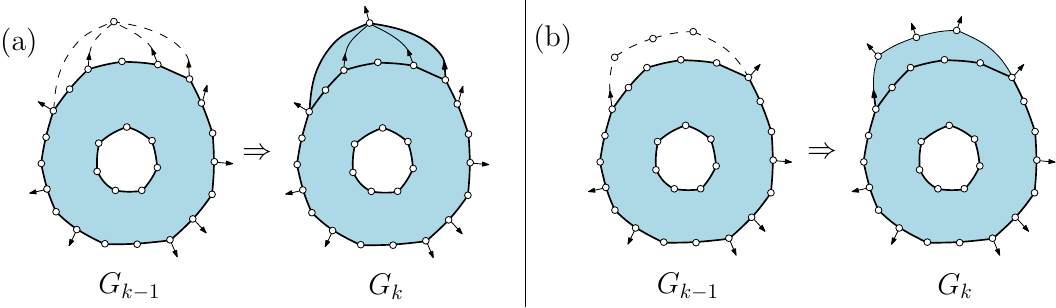}
\end{center}
 \caption{The two possible transitions from $G_{k-1}$ to $G_k$ in a  canonical ordering of 
an internally $3$-connected cylindric map.}
\label{fig:step_canonical_3conn}
\end{figure}

\begin{figure}[t]
\begin{center}
\includegraphics[width=12cm]{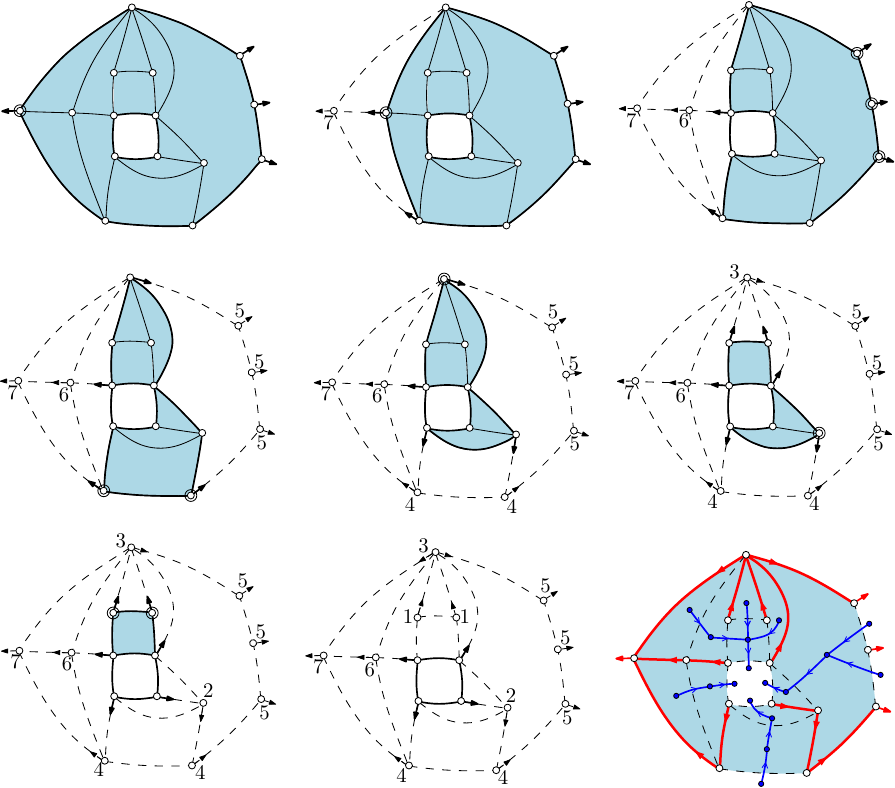}
\end{center}
 \caption{
Shelling procedure to compute a canonical ordering of an internally $3$-connected cylindric map (at each step the next deleted vertices are surrounded,
the ranks of successively shelled vertices are also indicated).  
The last drawing shows the cylindric map with the underlying forest and dual forest.}
\label{fig:canonical_3conn_complete}
\end{figure}

Given a canonical ordering of $G$, the \emph{rank} of a vertex $v\notin \Cin$ is the smallest $k$ such that $v\in G_k$. 
We prove the existence of such a canonical ordering by induction on the number of internal faces, which also yields
a shelling procedure similar to the one described in the triangulated case.  
A first remark is that if $G$ is not reduced to a cycle, then $G$ must have an active vertex for $\Cext$
that is not on $\Cext\backslash \Cin$ (if all the active 
vertices of $G$ were on $\Cext\cap\Cin$, it would easily yield 
a $2$-separating curve not enclosing any active vertex, a contradiction). 
To describe the shelling procedure we need a bit of terminology. 
An internal face $f$ is called \emph{separating}
if there is a separating $2$-chord (always for $\Cext$ here) passing by $f$. 
A vertex on the outer face is called \emph{admissible}
if it is active, not on $\Cin$, and not incident to any separating face.  
Note that, by deleting an admissible vertex and declaring all its neighbours as active, one gets a cylindric map $G'$ such that $G$ is obtained from $G'$
by applying the operation of Fig.~\ref{fig:step_canonical_3conn}(a). 
We now prove that either there is an admissible vertex or it is possible to get $G$ from a cylindric map $G'$ with one less internal face
by applying the operation of Fig.~\ref{fig:step_canonical_3conn}(b).  
For a given separating face $f$,  
the \emph{maximal separating $2$-chord for $f$}
is the separating $2$-chord $c$ passing by $f$ and such that the cycle enclosed by $c$
 is maximal (for the containment relation). 
Consider the set $E$ of maximal separating 
$2$-chords associated to all separating internal faces of $G$. 
If $E$ is not empty (i.e., there is at least one separating internal face), let $c$ be a separating 2-chord in $E$ that is minimal (for the containment relation of enclosed cycles) 
and let $f$ be the internal face to which $c$ belongs.  
Two cases can occur. If the cycle enclosed by $c$ contains no other internal face than $f$, 
then the 
subgraph $G_c$ of $G$ in the cycle enclosed by $c$ (including the boundary of the cycle) 
is a path $P$ that has at least two edges. Then by deleting the edges and internal vertices of $P$, 
and declaring as active the two extremities of $P$,   
we obtain a cylindric map $G'$ such that $G$ is obtained from $G'$
by applying the operation of Fig.~\ref{fig:step_canonical_3conn}(b). Otherwise $G_c$ 
contains at least one internal face $f'$. 
Among the separating $2$-chords passing by $f$ and whose enclosed cycle
contains $f'$, let $c'$ be the minimal one (for the containment relation
of enclosed cycles). Let $P$ be the path on $\Cext$
inside $c'$, let $u,w$ be the extremities of $P$. 
By minimality of $c'$, all vertices of $P\backslash\{u,w\}$ are not incident to $f$. In addition, at least one of these vertices  
has to be active (since $G$ is internally $3$-connected). Let $v$ be such a vertex. 
By minimality of $c$, 
all internal faces (except for $f$) in the cycle enclosed by $c$ are non-separating, hence $v$
 (which is not incident to $f$) is admissible. 
In case $E$ is empty, then there is no separating face. As we have seen, there is at least one active vertex on $\Cext\backslash \Cin$.
Since there is no separating face, this vertex is admissible. 
To sum up, in all cases, it is possible to obtain $G$ from a smaller cylindric map $G'$ by applying the operation of Fig.~\ref{fig:step_canonical_3conn}(a) 
or Fig.~\ref{fig:step_canonical_3conn}(b). 
It is also readily checked that the outer face of $G'$ is a simple cycle and that $G'$ satisfies the conditions of Definition~\ref{def:cylindricCanonicalOrdering3conn}. 
So we can continue inductively (starting from $G'$) until there just remains the cycle $\Cin$, 
which yields a shelling procedure
to compute a canonical ordering satisfying Definition~\ref{def:cylindricCanonicalOrdering3conn}. An example is shown in Fig.~\ref{fig:canonical_3conn_complete}.  
 
Let us now justify that the shelling procedure has linear time complexity. 
At each step, for each internal face $f$ whose contour meets the current outer boundary $C_k$, 
we denote by $V(f)$ the number of outer vertices incident to $f$ and by $E(f)$
the number of outer edges on the contour of $f$.
Similarly as in Kant~\cite{Kan96} for the planar case, we note that, at each step, an internal face is non-separating iff $E(f)\leq 1$ and $V(f)=E(f)+1$; otherwise
in the case where $E(f)\geq 2$ and $V(f)=E(f)+1$, then the face $f$ can be shelled (corresponding to the reverse of the transition in Fig.~\ref{fig:step_canonical_3conn}(b)).
At each step, for a current active outer vertex $v$,   
let $N(v)$ be the number of separating faces incident to $v$.
Note that $v$ is admissible iff $N(v)=0$ and $v\notin\Cin$ 
(in which case $v$ can be shelled, corresponding to the reverse of the transition from Fig.~\ref{fig:step_canonical_3conn}(a)).  
By maintaining the quantities $E(f)$ and $V(f)$ for all internal faces touching the outer face, one can also maintain
the quantities $N(v)$ for all outer vertices (as well as their status: active or non-active). 
The shelling is done using two stacks $S$, $S'$: in $S$ are stored the current admissible vertices, 
in $S'$ are stored the current faces $f$ for which $E(f)\geq 2$ and $V(f)=E(f)+1$. At each step, at least one of the two stacks is non-empty (as we have proved above);
so it suffices to shell the top-vertex from $S$ or the top-face from $S'$, and then update the quantities $E(f)$, $V(f)$, $N(v)$.
Maintaining all these informations over the shelling procedure takes amortized time $O(|E|)$ (with $|E|$ the number of edges of the cylindric map), 
which is also $O(n)$ since $|E|\leq 3n$. More details on implementing such a procedure in linear time are given by Kant~\cite{Kan96} (for the shelling procedure
in the planar case). 

\vspace{.4cm}

\noindent{\bf Underlying forest and dual forest.} 
Given an internally $3$-connected cylindric map $G$ with no chord at $\Cin$, and endowed with a canonical ordering $\pi$, we define  
the \emph{underlying forest} $F$ for $\pi$ as the oriented subgraph of $G$ where each active 
vertex $v\in \Cext$  has outdegree $0$, and where each other vertex 
 has exactly one outgoing edge, which is
connected to the adjacent vertex $u$ of $v$ of largest rank in $\pi$. 
%
%
Since the edges are oriented in increasing labels, $F$ is a spanning (oriented) forest; each component of the forest is rooted at each of the active vertices on $\Cext$. 
The \emph{augmented map}  $\widehat{G}$ (seen as a map on the sphere) 
is defined as the map obtained from $G$ by adding a vertex 
$w_1$ inside $\Bin$, a vertex $w_2$ inside $\Bext$, and connecting 
 all vertices around $\Bin$ to $w_1$ and all active vertices around $\Bext$ to $w_2$.
 We define $\widehat{F}$ as $F$ 
plus all edges (of $\widehat{G}$) incident to $w_1$ and all edges incident to $w_2$.  
And we define the \emph{dual forest} $F^*$ 
for $\pi$ as the graph formed by the vertices of $\widehat{G}^*$ (the dual of  $\widehat{G}$)  and by the edges of $\widehat{G}^*$ 
that are dual to edges not in $\widehat{F}$. Each of the trees (connected components)
of $F^*$ is rooted at a vertex ``in front of'' each edge of $\Bin$, and the  
edges of the tree can be oriented toward this root-vertex 
(see Fig.~\ref{fig:canonical_3conn_complete} bottom right).
Similarly as in the triangulated case, for $e^*\in F^*$, we call \emph{path from $e^*$ to the root}
the path from $e^*$ to the root of the tree-component of $F^*$ to which $e^*$ belongs. 



\vspace{.4cm}

\subsection{Periodic drawing algorithm for internally $3$-connected cylindric maps with no active vertex for $\Cin$.} 
Given an  internally $3$-connected cylindric map $G$ with no active vertex for $\Cin$,  
 we first  compute 
a canonical ordering of $G$, and then draw $G$ in an incremental way, similarly as in the triangulated case.
A first useful remark is that, at any step $k$, if we look at a path $P$ of edges on $C_k$ connecting two consecutive active vertices for $C_k$ 
(i.e., $P$ starts at an active vertex for $C_k$, ends at an active vertex for $C_k$, and all internal vertices of $P$ are non-active),
then $P$ contains exactly one edge not in the underlying forest; this edge is called the \emph{bottom-edge} of $P$. 
We start with a cylinder of width $2|\Cin|$ and height $0$ (i.e.,
a circle of length  $2|\Cin|$) and draw the vertices of $\Cin$ equally spaced 
on the circle: space $2$ between two consecutive vertices (as in the triangulated case, it is possible to start with any configuration of points on a circle
such that any two consecutive vertices are at even distance).  
Then the strategy for each $k\geq 1$ is to compute the drawing of $G_k$ out of the drawing of $G_{k-1}$. The difference with the triangulated case
is that there are now two cases (those shown in Fig.~\ref{fig:step_canonical_3conn}).   

\vspace{.2cm}

\emph{(a) Addition of one vertex and several internal faces.} 
Consider the case of Fig.~\ref{fig:step_canonical_3conn}(a); let $v$ be the new added vertex, i.e., the unique vertex of $G_k$ not in $G_{k-1}$. 
Let $u_1,\ldots,u_s$ (with $s\geq 2$) be the neighbours of $v$ on $C_{k-1}$, 
such that the path $\gamma$ of $C_{k-1}$ from $u_1$ to $u_s$ has the outer face on its left. 
Let $e_1$ be the first edge on $\gamma$ and let $e_2$ be the last edge on $\gamma$.
There are two subcases. (1) If (in the drawing of $G_{k-1}$ obtained so far), $\mathrm{slope}(e_1)<1$ and $\mathrm{slope}(e_2)>-1$,
then, as in the triangulated case, we place $v$ at the intersection of the ray of slope $1$ starting from $u_1$ and the ray of slope $-1$ starting from $u_s$;
and we draw all the edges from $v$ to $u_1,\ldots,u_s$ as segments. 
 (2) If $\mathrm{slope}(e_1)=1$ or $\mathrm{slope}(e_2)=-1$, 
let $e_{\ell}$ be the bottom-edge of the part of $\gamma$ between $u_1$ and $u_2$ and let $e_r$ be the bottom-edge of the part of $\gamma$ between $u_{s-1}$ and $u_s$.  
Let $P_{\ell}$ (resp. $P_r$) be the path
in $F^*$ from $e_{\ell}^*$ (resp. $e_r^*$) to the root. Similarly as in the triangulated case (see Fig.~\ref{fig:one_step}),  
we stretch the cylinder by inserting
 a vertical strip of length $1$ along $P_{\ell}$ and another one along $P_r$.  
 After this, 
we insert (as in subcase (1)) the vertex $v$ at the intersection of the ray of slope $1$ starting from $u_1$ 
and the ray of slope $-1$ starting from $u_s$, and we connect $v$ to all vertices
 $u_1,\ldots,u_s$ by segments.  
The two rays from $u_1$ and $u_s$ actually intersects at a grid point since the Manhattan
distance between any two vertices on $C_{k-1}$ is even. 

\vspace{.2cm}

\emph{(b) Addition of one internal face.} 
Consider the case of Fig.~\ref{fig:step_canonical_3conn}(b), where we denote
 $u,v_1,\ldots,v_s,w$ ($s\geq 1$) 
the vertices of the new added chain, such that the path $\gamma$ on $C_{k-1}$
from $u$ to $w$ has the outer face on its left. Let $e_1$ be the first edge on $\gamma$ and $e_2$ the last edge on $\gamma$ (note that $e_1$ might be equal to $e_2$). 
Let $e$ be the bottom-edge of $\gamma$, and let $P$ be the path
in $F^*$ from $e^*$ to the root.  
There are two subcases. (1) If (in the drawing of $G_{k-1}$ obtained so far), $\mathrm{slope}(e_1)<1$ and $\mathrm{slope}(e_2)>-1$,
we insert a vertical strip of width $2s-2$ along $P$, increasing by $2s-2$ the $x$-span of each edge of $G_{k-1}$ dual to an edge in $P$. 
(2)  If $\mathrm{slope}(e_1)=1$ or $\mathrm{slope}(e_2)=-1$,
we insert a vertical strip of width $2s$ along $P$, increasing by $2s$ the $x$-span of each edge of $G_{k-1}$ dual to an edge in $P$.
Then we insert the vertices $v_1,\ldots,v_s$ into the drawing as follows. Let $R_u$ be the ray of slope $+1$ from $u$
and let $R_w$ be the ray of slope $-1$ from $w$. Let $q$ be the intersecting point of the two rays, denote by $y(q)$ its ordinate; 
let $S$ be the horizontal segment connecting $R_u$ to $R_w$ at ordinate $y(q)-s+1$. Note that $S$ has length $2s-2$. Then we insert $v_1,\ldots,v_s$
equally spaced (space $2$ between two consecutive vertices) on $S$, with $v_1$ at the left extremity and $v_s$ at the right extremity of $S$.
And we draw the edges of the chain $u,v_1,\ldots,v_s,w$ as segments. It is a simple exercise to show that the added chain does not cross the drawing of $G_{k-1}$.



\begin{figure}[tp]
\hspace*{-4mm}
\includegraphics[width=13.18cm]{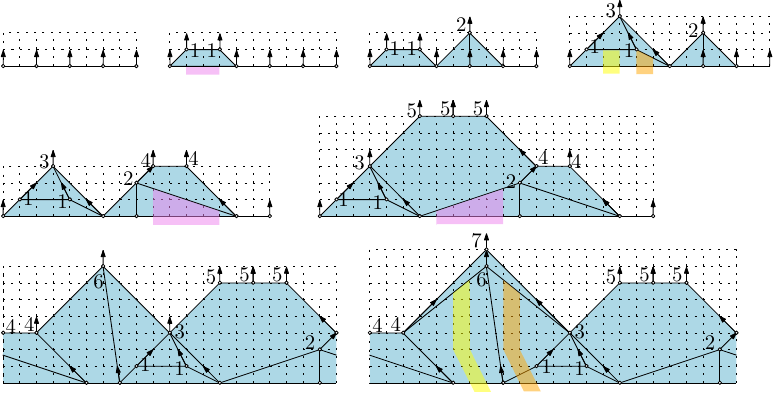}
 \caption{Complete execution of the algorithm computing an $x$-periodic drawing of an internally $3$-connected cylindric map 
(no active vertex for $\Cin$). The steps follow the canonical ordering (in increasing order) computed
in Fig.~\ref{fig:canonical_3conn_complete}.}
\label{fig:DrawingOnCylinder3conn}
\end{figure}

%

Fig.~\ref{fig:DrawingOnCylinder3conn}  
shows the execution of the algorithm on the example of Fig.~\ref{fig:canonical_3conn_complete}. 
The fact that the whole drawing of $G_k$ remains crossing-free convex relies on the  following inductive property, which is easily shown
to be maintained at each step $k$ from $1$ to $n$:

\vspace{.2cm}
 
\noindent\hspace{1cm}\begin{minipage}{11cm}
{\bf Pl}: 
In the upper boundary part $\gamma$ between two consecutive active vertices ---written (from left to right) as $\gamma=P_1,e,P_2$ with $e$
the bottom-edge of $\gamma$--- the edges of $P_1$ have slope $-1$, $e$ has slope in $\{-1,0,1\}$, 
and the edges of $P_2$ have slope $+1$.  

For each bottom-edge $e$ on $C_k$, 
let $P_e$ be the path in $F^*$ from $e^*$ to the root, let $E_e$ be the set of edges dual to edges in $P_e$,  
and let $\delta_e$ be any nonnegative integer.  
Then the drawing remains planar when successively  increasing by $\delta_e$ the $x$-span of all edges of $E_e$, for all bottom-edges $e\in C_k$. 
\end{minipage}

\vspace{.2cm}

\begin{remark}\label{rk:slopes}
Clearly the resulting drawing has also the property that for each edge $e$ in (resp. not in) the underlying forest $F$, the absolute value of the slope of $e$ is at least $1$ (resp. at most $1$). 
\end{remark}

We now prove the bounds on the grid-size ($w$ is the width and $h$ the height of the cylinder on which $G$ is drawn).  
If $|\Cin|=t$  then the initial cylinder is $2t\times 0$; and at each vertex insertion, the grid-width grows by $0$ or $2$.
Hence $w\leq 2n$. In addition, due to the slope conditions, the $y$-span (vertical span) of every internal face $f$
is not larger than the current width  at the time when $f$ is inserted in the drawing. 
Hence, if we denote by $v$ the vertex of $\Cext$ that is closest (at face-distance $d$)
to $\Cin$, then the ordinate of $v$ is at most $d\cdot(2n)$.  
And due to the slope conditions, the vertical span of $\Cext$ is at most 
$w/2\leq n$.  Hence the grid-height is at most $n(2d+1)$. The linear-time
complexity is shown next.

\vspace{.4cm}

\noindent{\bf Linear-time implementation.}  
The implementation is completely similar to the one in the triangulated case. In a first pass 
one computes the $y$-coordinates of  vertices and the $x$-span $r_e$ 
of each edge $e\in G$ at the time $t=k$ when $e$ appears in $G_k$ (as well one gets to know which extremity of $e$ is the left-end vertex). 
Afterwards if $e\notin F$ and $e\notin \Cext$,  
 the $x$-span of $e$ might further increase due 
to the insertion of new vertices. Let $s_e$ be the total further increase of the $x$-span undergone by $e$ after its appearance.
Let $w_e\geq 0$ be the increase of the $x$-span undergone by $e$ at the step $k$ such that $e\in C_{k-1}$ and $e\notin C_k$ if such a step exists (otherwise $w_e=0$); 
$w_e$ is called the \emph{weight} of $e$
(the quantities $w_e$ can be computed in a first pass together with the quantities $r_e$).  
Let $T_e^*$ be the subtree hanging from $e^*$ (including $e^*$) in the dual forest $F^*$, and let $W_e$ be the sum of the weights of (the dual of) all edges in $T_e^*$. 
Then, as in the triangulated case, $s_e=W_e$.  Since the quantities $s_e$ are easily computed in linear time from the quantities $w_e$ (starting from the leaves and going
up to the roots of $F^*$), this gives a linear time implementation.

\vspace{.2cm}

To sum up, we have proved Proposition~\ref{prop:triconn_cyl_first} for internally $3$-connected cylindric maps with no active vertices for $\Cin$. 

\vspace{.2cm}

\begin{remark}\label{rk:stretch_3conn}
As in the triangulated case (Remark~\ref{rk:stretch_triang}), if we consider  the \emph{initial-stretch} $R=(r_e)_{e\in \Cin}$ and the \emph{final-stretch} $T=(t_e)_{e\in \Cin}$, 
then the vector $S:=T-R$ is an invariant (does not depend on $R$), because $s_e=t_e-r_e$ only depends on the canonical ordering.  
Hence if a certain drawing yields final-stretch vector $T$ (starting from 
initial-stretch vector $R$), 
then for any vector $T'$ of the form $T+2V$ ---with $V$ a vector of non-negative integers---
one can redraw the graph to have final-stretch vector $T'$ (by taking $R'=R+2V$ as initial-stretch vector instead of $R$). 
\end{remark}

\begin{remark}\label{rk:recover_Kant}
Our algorithm extends Kant's algorithm~\cite{Kan96}, which works for internally $3$-connected planar maps (i.e., planar maps with 
a simple outer face contour and where all 2-separating curves have to pass by the outer face).  
Indeed, if we are given an internally $3$-connected plane graph $G$ (with an outer root-edge) we can
turn it into a cylindric internally $3$-connected map $\tilde{G}$ by adding a vertex of degree $2$ connected to the two extremities of the root-edge
(we also declare as active all outer vertices of $\tilde{G}$ that are not one of the three vertices of the inner boundary). 
Then, Kant's drawing of $G$ is recovered from the periodic drawing
of $\tilde{G}$ upon deleting the added vertex of degree $2$, see Fig.~\ref{fig:Kant}. 
\end{remark}

\begin{figure}
\begin{center}
\includegraphics[width=12cm]{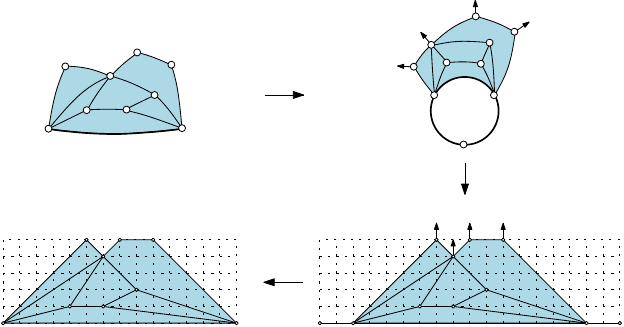}
\end{center}
\caption{Kant's algorithm for internally $3$-connected plane graphs is recovered from our algorithm by adding a vertex of degree $2$ to complete
the inner boundary.}  
\label{fig:Kant}
\end{figure}

\subsection{Allowing for non-contractible $2$-separating curves (no active vertex on $\Cin$).}\label{sec:allow_2sep_curves}
The method (canonical ordering and incremental drawing algorithm) is easily extended to essentially internally $3$-connected maps
with no $1$-separating curve. Additionally we require here that at least two outer vertices are active 
(the case of one outer vertex active will be treated in the next section). Let $G$ be such a cylindric map.
 The definition of canonical ordering
for $G$ is exactly the same as for internally 3-connected cylindric maps, 
adding the possibilities that $G_k$ is obtained from $G_{k-1}$
as shown in Fig.~\ref{fig:step3conn2}. 
\begin{figure}[t]
\begin{center}
\includegraphics[width=13cm]{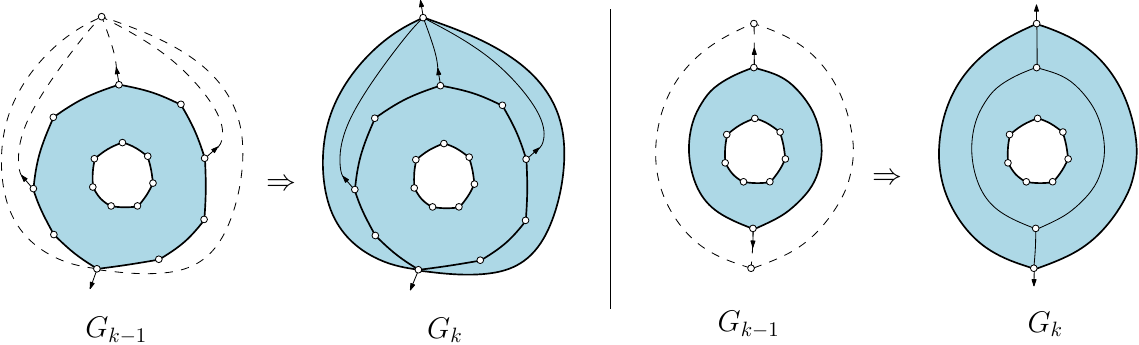}
\end{center}
\caption{When non-contractible 2-separating curves are allowed (and there are at least two active vertices for $\Cext$), the additional cases shown here might occur
for the transition from $G_{k-1}$ to $G_k$. 
} 
\label{fig:step3conn2}
\end{figure}

\begin{figure}
\begin{center}
\includegraphics[width=13cm]{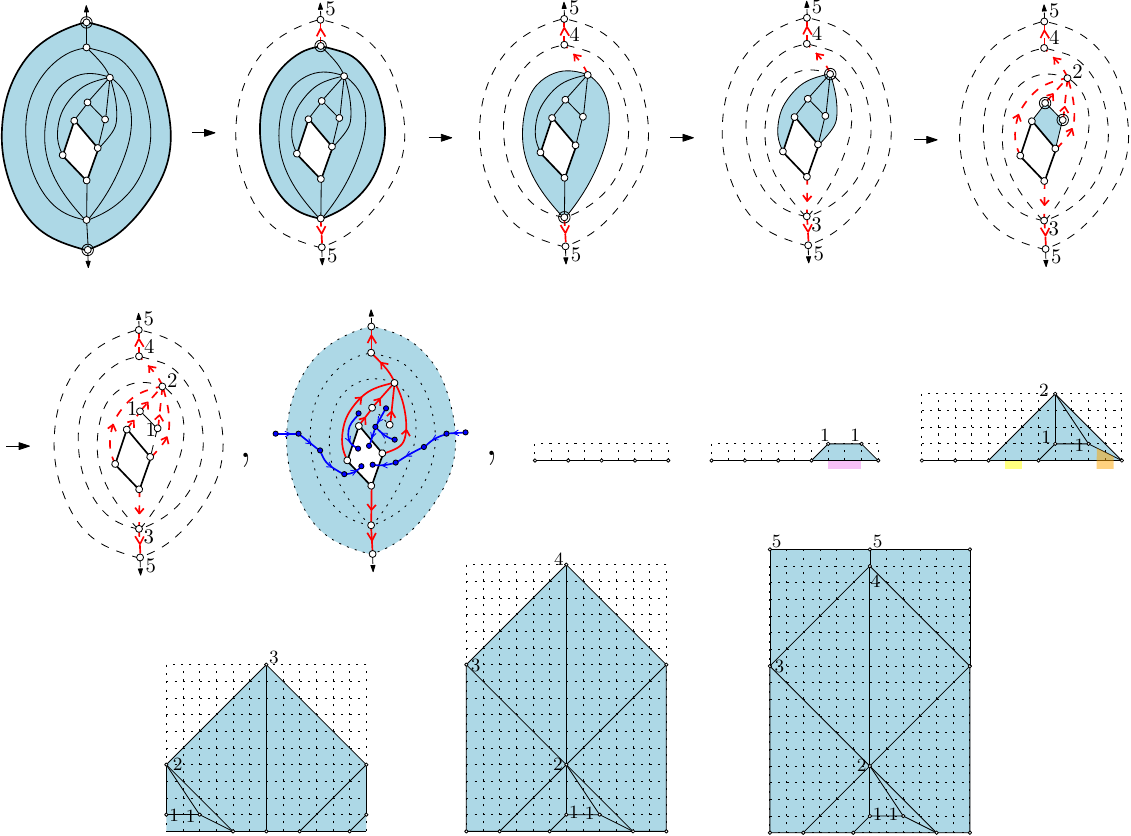}
\end{center}
\caption{(Before first comma): The shelling procedure for an essentially internally $3$-connected cylindric map $G$ with at least two active vertices for $\Cext$, 
with no active vertex for $\Cin$ nor 1-separating curve; (after first comma) the underlying forest and dual forest; (after second comma) the incremental drawing algorithm.} 
\label{fig:shelling2_3conn}
\end{figure}

Such a canonical ordering can be computed by a shelling procedure that extends the one of Section~\ref{sec:canonical_3conn}.
Call a $2$-separating curve \emph{internal} if it passes by two internal faces and does not pass by two outer vertices; in terms of the associated corner-map $S$ such a 2-separating curve corresponds
to a separating 4-cycle where the two incident white vertices (vertices of the cylindric map) are not
both on $\Cext$.  
This time, a vertex on the outer boundary and not on the inner boundary is called \emph{admissible} if it is not incident to a separating face  
nor incident to an internal 
$2$-separating curve. By the same arguments as in Section~\ref{sec:canonical_3conn}, one can show that if there is a separating face, then either there is 
an admissible vertex (which can be shelled) or there is an internal face (sharing a path of $r\geq 2$ edges  with the outer boundary) that can be shelled. 
If there is no separating face but there is at least one internal 2-separating curve, one can
check that, if two vertices of $\Cext$ are incident to an internal 2-separating curve,
then one must be in the situation in the right-part of Fig.~\ref{fig:step3conn2}. 
Otherwise at most one  vertex of $\Cext$ is incident to a 2-separating curve, 
and since there are at least two outer vertices active, at least one of them is admissible.   
Finally, in case there is no separating face nor 2-separating curve, then any active outer vertex is admissible
and can be shelled. 
One can continue shelling iteratively until there just remains $\Cin$ (since there is no 1-separating curve, it is easy to see that the 
property of having at least two active outer vertices is automatically maintained).  
A linear time implementation is also readily obtained by maintaining, for each outer vertex, 
how many separating faces and how many internal 2-separating curves it is incident to. 
Note that such a canonical ordering also induces an underlying forest $F$ and an underlying dual forest $F^*$. 
Finally, the incremental drawing algorithm (and linear implementation using the dual forest) 
works in the same way as for internally 3-connected cylindric maps. 
An example is shown in Fig.~\ref{fig:shelling2_3conn}. 
(the case in the right-part of Figure~\ref{fig:step3conn2} 
is new but easy to handle, it is shown in the last
step of the drawing in Fig.~\ref{fig:shelling2_3conn}).  
 
The grid bounds are also the same as for internally $3$-connected cylindric maps (the arguments to obtain the bounds in the simple case did not use the fact
that there are no 2-separating curves). So this gives Proposition~\ref{prop:triconn_cyl_first} for essentially internally $3$-connected cylindric maps with no 
1-separating curve and with at least two outer vertices active. 
Finally, Remark~\ref{rk:stretch_3conn} still holds here (the arguments are the same).

\vspace{.2cm}

\subsection{Allowing for 1-separating curves (no active vertex for $\Cin$).}
We finally explain how to deal with 1-separating curves. Similarly as in the triangulated case, we do not extend the notion of canonical ordering
but simply decompose (at 1-separating curves turned into loops) such a cylindric map into a ``tower'' of components separated by loops. 
Let $G$ be an essentially internally $3$-connected cylindric map with $n$ vertices. 
Note that if $G$ has a loop $e$, then there is a 1-separating curve ``just inside'' $e$ (if $e$ is not
$\Cin$) and there is a 1-separating curve ``just outside'' $e$ (if $e$ is not $\Cext$). 

Similarly as for cylindric triangulations, there are a few cases to treat:

\vspace{.2cm}

\emph{(a) $\Cin$ is a loop, the unique 1-separating curve is the one juste outside  $\Cin$, and there are at least two active vertices.} 
In that case, the algorithm of Section~\ref{sec:allow_2sep_curves} 
(canonical ordering, shelling procedure, 
and incremental drawing procedure) works in the same way, see Fig.~\ref{fig:draw_one_inn_loop_3conn} for an example.
In addition, by the arguments of Remark~\ref{rk:stretch_3conn}, 
for any $n'\geq n$, $G$ has a periodic drawing of width $2n'$ and height at most $n'(2d+1)$, with $d$ 
the face-distance between the two boundaries.

\begin{figure}
\begin{center}
\includegraphics[width=10cm]{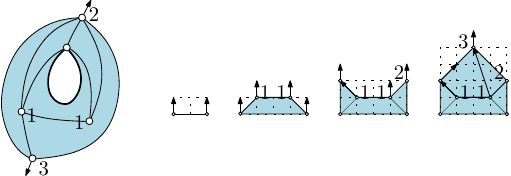}
\end{center}
\caption{Drawing algorithm when there is a unique 1-separating curve that 
is just outside the inner boundary (which is a loop).} 
\label{fig:draw_one_inn_loop_3conn}
\end{figure}

\vspace{.2cm}

\emph{(b) $\Cext$ is a loop, $\Cin$ is possibly a loop, and the only 1-separating
curves are just inside $\Cext$ and just outside $\Cin$.} 
In that case, there is the possibility that $\Cin$ and $\Cext$
are both loops and are connected by one edge $e$, in which
case the drawing is very easy (draw $\Cin$ and $\Cext$ as 
horizontal segments, and $e$ as a vertical segment). 
Otherwise, 
let $u$ be the unique outer vertex; it is incident to a 1-separating
curve; equivalently it is incident to a non-contractible 2-cycle $c'$  
in the corner-map $S$. Then, since we are not in the first 
case above, it is easy to see that, in $S$, 
there is a non-contractible 4-cycle ``directly inside" $c'$. 
Hence the set of non-contractible 4-cycles is non-empty. 
Let $c$ be the ``innermost" non-contractible 4-cycle,
which we now consider in $G$ in the form of a 2-separating curve.   
  Let $f_1,f_2$ the two internal faces visited by $c$ and let $v$
the vertex different from $u$ and visited by $c$. If there is an edge of $f_1$ connecting $u$ and $v$,
call $e_1$ this edge, otherwise draw an edge, again called $e_1$, inside $f_1$ that connects $u$ and $v$. Similarly if there is an edge of $f_2$ connecting $u$ and $v$,
call $e_2$ this edge, otherwise draw an edge $e_2$ inside $f_2$ that connects $u$ and $v$. 
Call $\tilde{c}$ the 2-cycle formed by $e_1$ and $e_2$. Cutting along $\tilde{c}$ 
we obtain two components: a plane 3-connected map $T$ of outer degree $3$,  and a cylindric
essentially internally 3-connected map $G'$ of outer degree $2$ and such that $G'$
has no internal 2-separating curve incident to $u$, and no 1-separating curve, except possibly
just outside $\Cin$ if $\Cin$ is a loop. Hence, 
if we declare both outer vertices of $G'$ as active, then $u$ is admissible and we can apply the 
results of Section~\ref{sec:allow_2sep_curves}: 
there is a canonical ordering of $G'$ such that $u$ is the first
shelled vertex; and this canonical ordering yields a periodic drawing of $G'$ where the 
upper boundary is made of $e_1$, of slope $-1$, and $e_2$, of slope $+1$. 
In addition, denoting by $2m$ the maximum of the widths of the drawings of $T$ and of $G'$,
one can redraw the drawing of smaller width so that both drawings have width $2m$.
Then  the drawing of $T$ (taken upside down) fits into the upper boundary of $G'$,
 yielding a periodic drawing of $G$ of width $2m$, see Fig.~\ref{fig:draw_one_ext_loop_3conn}.
In that case, the height is at most $2dm$ where $d$ is the face-distance between the two boundaries  (indeed, in the usual bound $h\leq (2d+1)m$
the $+1$ in the parenthesis is due to the vertical extension of the upper boundary, which is $0$ here). 
More generally, by the arguments of Remark~\ref{rk:stretch_3conn}, for any $n'\geq m$ there is a periodic drawing of $G$ with horizontal lower and upper boundaries,
of width $2n'$ and height at most $2dn'$. It remains to show that, if $e_1$ (similarly $e_2$)
was not present in $G$, then the deletion of $e_1$ keeps the drawing (weakly) convex.
To get convinced of it, one just has to notice that, if $e_1$ is not in $G$
and thus is to be deleted, then the properties of the drawings (Remark~\ref{rk:slopes}) guarantee that the ($\pi/2$) 
sector between $e_1$ and $e_2$ in the drawing of $T$ contains at least one edge (different from $e_1,e_2$); and in the drawing of $G'$ the $\pi/2$ sector around $v$ starting (in ccw direction)
from $e_1$ also contains at least one edge (different from $e_1$). Hence the angle at $v$ left by the deletion of $e_1$ is at most $\pi$. Similarly the angle at $u$ left by the deletion of $e_1$ is also at most $\pi$; and the angles at $u,v$ left by the deletion of $e_2$ (if $e_2$
is to be deleted) are at most $\pi$.

\begin{figure}
\begin{center}
\includegraphics[width=13cm]{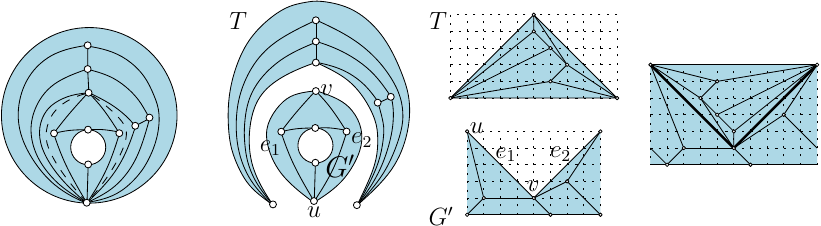}
\end{center}
\caption{Drawing algorithm when $\Cext$ is a loop and the unique 1-separating curve
is just inside $\Cext$: the map is split along the innermost 2-separating curve.} 
\label{fig:draw_one_ext_loop_3conn}
\end{figure}

\vspace{.2cm}

\emph{(c) General case (no active vertex at $\Cin$).} 
 For each 1-separating curve $c$, if $c$ is not just inside
nor just outside a loop, then we draw a loop-edge along $c$.  Also, if there is just one active vertex
and $\Cext$ is not a loop, then we add a loop at the unique active vertex, so that $\Cext$ becomes
a loop. 
After doing this, let $r$ be the number of loops (note that the case $r=0$ has already
been treated in the previous sections, so we can assume $r\geq 1$), and 
let $\ell_1,\ldots,\ell_r$ be the sequence of nested loops of $G$ (from innermost to outermost) and let $G^{(0)},\ldots,G^{(r)}$
be the $r+1$ components that result from cutting successively along all these loops, see Fig.~\ref{fig:stack_loops_3conn}.  
For $i\in [0..r]$ let $d_i$ be the face-distance between the two boundaries in $G^{(i)}$; and let $d$ be the face-distance between the 
two boundaries of $G$. Note that $d=\sum_id_i$.  
By construction, each component $G_i$ has its 1-separating curves either just inside the outer
boundary (if the outer boundary is a loop, which occurs if $i\in[0..r-1]$) or just outside the inner boundary (if the inner boundary
is a loop, which occurs if $i\in[1..r]$). 
Hence each component $G_i$ has a periodic drawing (according to cases (a) and (b)) 
whose boundaries are drawn as horizontal lines (except possibly for the outer boundary 
of $G_r$). 
Let $2n_1,\ldots,2n_r$ be the widths of the drawings of $G^{(1)},\ldots,G^{(r)}$ thus obtained. 
Let $m=\mathrm{max}(n_1,\ldots,n_r)$. By the arguments of Remark~\ref{rk:stretch_3conn}, each of the graphs $G^{(i)}$ can be redrawn so as to have width $2m$. 
Stacking up all these drawings, we obtain a periodic drawing of $G$ of width $2m$ and height at most $m(2d+1)$. 

\begin{figure}
\begin{center}
\includegraphics[width=12cm]{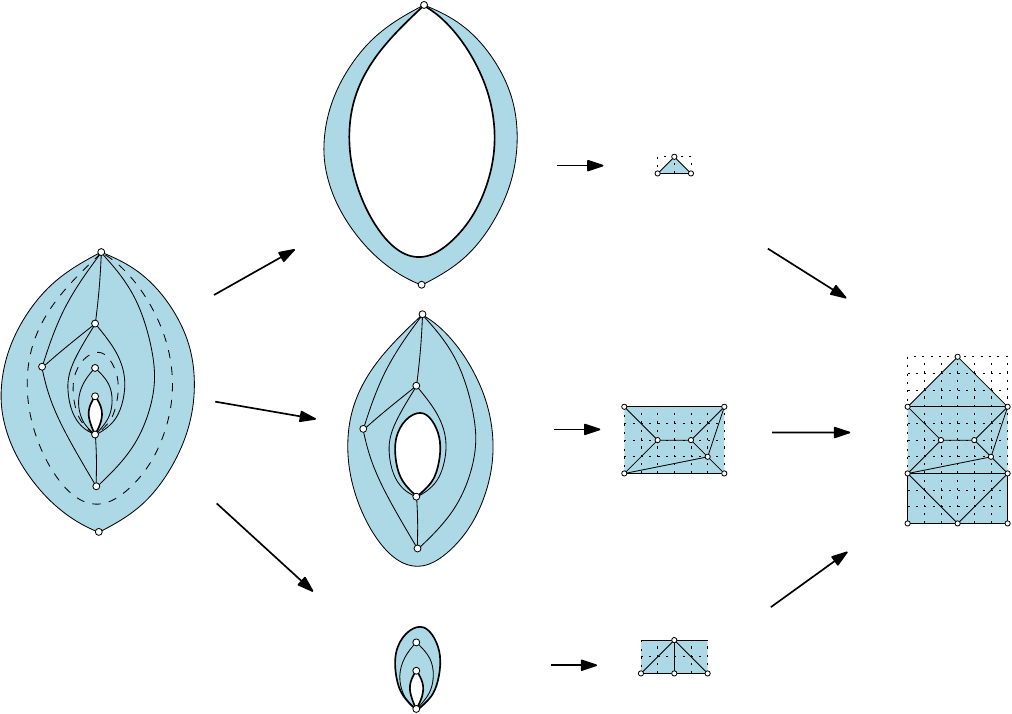}
\end{center}
\caption{Drawing an essentially internally 3-connected cylindric map $G$ (no active vertex for $\Cin$); at first loops (dashed in the left drawing) are drawn along 1-separating curves, then 
$G$ is  decomposed
at its loops; each component is redrawn so that the component-drawings  have 
the same width, and can be 
stacked up to obtain a periodic drawing of $G$.}
\label{fig:stack_loops_3conn}
\end{figure}

It remains to check that the deletion of the added loops keeps the drawing (weakly) convex.  
In the periodic drawing the loop is drawn as an horizontal segment $S=[a,b]$, where we denote by $a$ the left end and $b$ the right end. The properties of the drawings (Remark~\ref{rk:slopes}) 
easily imply
that there is at least one (non-loop)  edge in the $\pi/4$ sector around $a$ starting (in counterclockwise direction) 
at $S$, and there is at least one edge in the $3\pi/4$ sector around $a$ starting (in clockwise
direction) at $S$. Hence the angle at $a$ left by the deletion of $S$ is at most $\pi$.
Similarly the angle at $b$ left by the deletion of $S$ is at most $\pi$.

This establishes Proposition~\ref{prop:triang_no_chord}.

\begin{figure}[t]
\begin{center}
\hspace{-.2cm}\includegraphics[width=12.4cm]{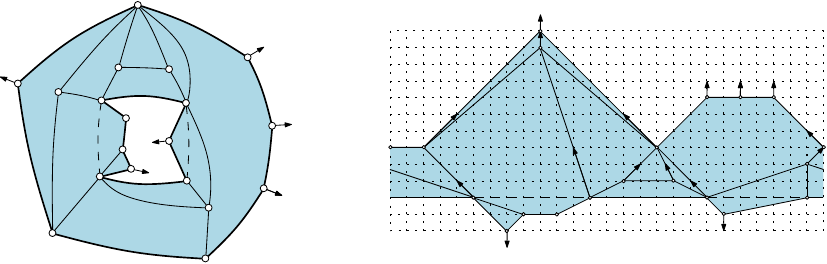}
\end{center}
 \caption{Drawing an internally 3-connected cylindric map with active vertices (and 2-chords) at $\Cin$.}
\label{fig:DrawingWithChords_3conn}
\end{figure}

\vspace{.2cm}

\subsection{\bf Allowing for active vertices at $\Cin$.}\label{sec:chords_cin_3conn} 
We finally explain how to draw an essentially internally 3-connected cylindric map $G$
in the general case, i.e., when allowing for active vertices at $\Cin$. The only 
requirement here is that there is at least one active vertex (for $\Cext$ or $\Cin$). 
We can assume that there 
is also at least one active vertex for $\Cext$ 
(otherwise the situation would be symmetric to the case where
 $\Cin$ has no active vertex, treated in the previous sections).

\subsubsection{The boundaries $\Cext$ and $\Cin$ are disjoint}  
Let $G$ be an essentially internally 3-connected cylindric map, with at least one active 
vertex for $\Cext$, and such that $\Cext$ and $\Cin$ are  disjoint (as in the triangulated
case, it is good here to imagine a standing cylinder, so that $\Cin$ is seen as the lower boundary
and $\Cext$ as the upper boundary).  As in the previous section,
we can consider the corner-map $S$ of $M$, and define \emph{a lower 2-chord} as 
a separating 2-chord for $\Cin$. A lower 2-chord is called \emph{maximal} if
it is maximal for the containment relation of enclosed cycles. 
For each maximal lower 2-chord, we can 
add an edge $e$ ``just above'' such a 2-chord (in the internal face it passes by) if $e$ is 
not already present.  
For each such edge $e$, let $Q_e$ be the so-called \emph{component under $e$}, which
is the subgraph of $G$ induced by $e$ and by the vertices and edges in the component below $e$.  Detaching all such components
we obtain a reduced cylindric map $G'$ such that, even when declaring all vertices of $\Cin$
as non-active, $G'$ is essentially internally 3-connected. We can thus draw $G'$ using the procedure of  Proposition~\ref{prop:triconn_cyl_first}. And we draw all detached components $Q_e$ using Kant's algorithm. 
 According to Remark~\ref{rk:stretch_3conn} we can then redraw $G'$ so that each edge $e$ has width large enough to plug
the drawing of $Q_e$ (recall that, if $|e|$ is the width of Kant's drawing of $Q_e$ and 
$w_e$ is the width of $e$ in the first obtained drawing of $G'$, then $G'$ can be redrawn
so that the width $\ell(e)$ of 
$e$ is the smallest integer at least $\mathrm{max}(|e|,w_e)$ and congruent to $w_e$ modulo $2$). 
Then for each such edge $e$, one can shift the left-extremity of the drawing of $Q_e$
so that the width of the bottom-edge of $Q_e$ is $\ell(e)$, and then (after rotation
by $\pi$) plug the drawing
of $Q_e$ into the lower segment for $e$ in the drawing of $G'$. The obtained
drawing of $G$ is clearly planar, see Fig.~\ref{fig:DrawingWithChords_3conn} for an example.  

We also have to check that, after deleting the edges that have been added (one such edge just
 above each maximal lower 2-chord, when needed), the drawing remains convex at corners
in internal faces. Let $e$ be such a deleted edge; note that (before
being deleted), $e$ is horizontal in the drawing; let $u$ be its right extremity and $v$ its left extremity.  The properties of the drawings easily imply that the sector of angle $3\pi/4$ around 
$u$ 
starting (in counterclockwise direction) at $e$ contains an edge (indeed $u$ is either
the left extremal vertex of $A$ or otherwise it has one incident 
edge in the underlying forest for $A'$). Moreover, since $u$ is the left extremal vertex of $Q_e$,
the sector of angle $\pi/4$ around 
$u$ 
starting (in clockwise direction) at $e$ contains an edge. We conclude that the corner at $u$
left by the deletion of $e$ has angle at most $\pi$ in the drawing. Similarly the angle at $v$
left by the deletion of $e$ has angle at most $\pi$. 
And by similar 
arguments as for cylindric triangulations, the width of the grid is at most $2n$ (with $n$
the number of vertices of $G$) and the height is at most $2n(d+1)$, with $d$ the face-distance
between the two boundaries. 

\subsubsection{The boundaries $\Cext$ and $\Cin$ meet} 
We now treat the case where $\Cext$ and $\Cin$ intersect. Recall that in that case $G$ decomposes
along such intersections as a cyclic sequence of elementary blocks called \emph{portions}, where
each portion is delimited by two intersection-vertices that are said to be \emph{extremal} (for the considered portion). A portion $A$ is \emph{complemented} 
by adding an outer edge $e_{\mathrm{out}}$  connecting
the two extremal vertices, so as to obtain a cylindric map $\widetilde{A}$. 
Note that $A$, if non-trivial, has
at least one non-extremal vertex that is active, either at $\Cin$ or at $\Cext$; without loss
of generality (one can exchange the roles of the outer and inner boundary for $\widetilde{A}$) 
we can assume that $A$ has at least one active vertex for $\Cext$ 
We can thus draw $\widetilde{A}$ by a procedure similar to the one described in the previous
section (by detaching components under maximal lower 2-chords). 

\begin{figure}[t]
\begin{center}
\includegraphics[width=13cm]{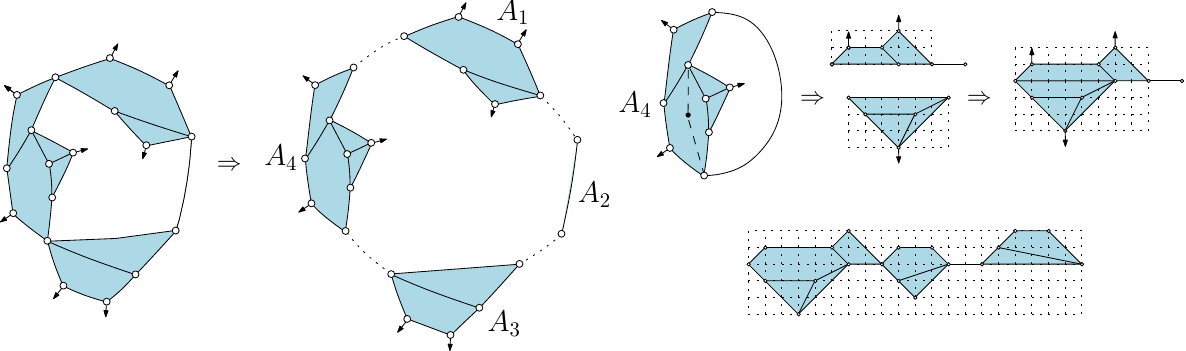}
\end{center}
 \caption{An internally essentially 3-connected cylindric map $G$ where $\Cext$ and $\Cin$ intersect
at $3$ vertices. Accordingly $G$ decomposes as a cylic sequence of $4$ portions (one of which is trivial). Each portion can be drawn (after adding an edge to complement it into a cylindric map)
by a decomposition at maximal lower 2-chords. The drawings of the portions are then
attached sequentially along the horizontal axis. 
} 
 \label{fig:portions}
\end{figure}
   
The overall drawing of $A$ is periodic planar, with $e_{\mathrm{out}}$ drawn as an horizontal
segment of width $2$.  
In addition, by similar arguments as in the previous section it can be checked that the drawing
is still convex after deleting the added edges (edges added just above maximal lower 2-chords). 

We can finally attach together (along the horizontal axis) the drawings of all portions thus obtained  (where the complementation edge is removed) 
to obtain an $x$-periodic drawing of $G$. By similar arguments as in the triangulated case
it can be checked that the width and height of the drawing are at most $2n$, with $n$ the 
number of vertices in $A$. This concludes the proof of Theorem~\ref{thm:triconn_cyl}.


\section{Periodic drawings on the torus}\label{sec:drawtorus}


\begin{figure}[t]
\begin{center}
\includegraphics[width=13cm]{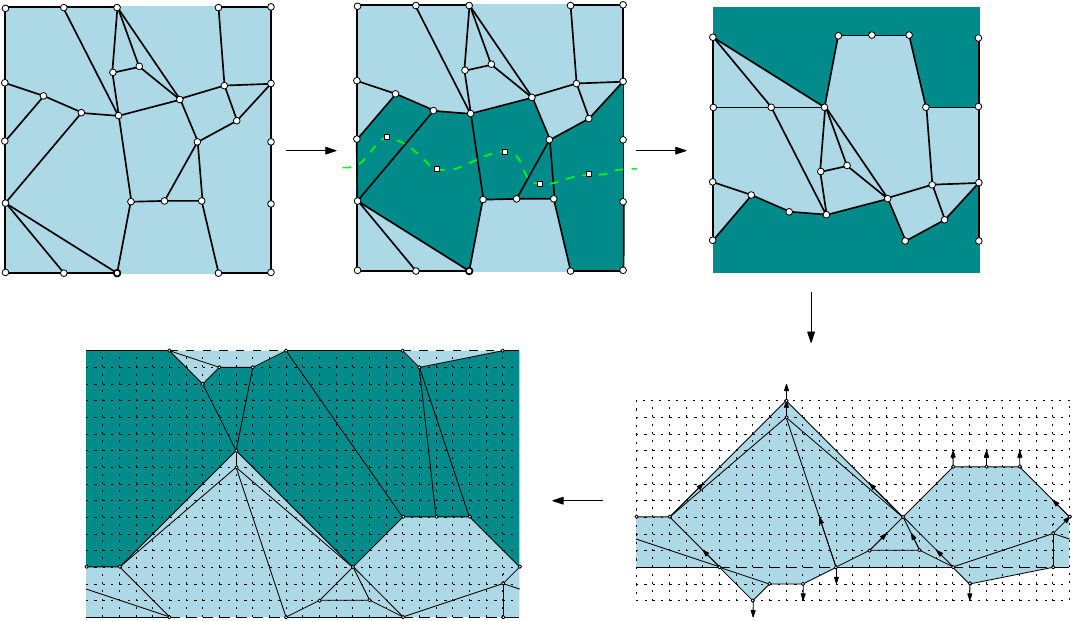}
\end{center}
 \caption{The successive steps to draw an (essentially) 3-connected toroidal map:
1) remove the edges inside a tambourine, (2) draw the obtained (essentially) internally 3-connected cylindric map, 3)
insert the edges of the tambourine back into the drawing.}
\label{fig:ToroidalDrawing}
\end{figure}


\subsection{Definitions and statement of the results}
A toroidal triangulation is a map on the torus with only triangular faces. 
A toroidal map is called \emph{simple} if it has no loop nor multiple edges, and is called \emph{essentially simple}
if its periodic representation in the plane is simple. 
A toroidal map is called \emph{$3$-connected} if it is $3$-connected as a graph, and is 
called \emph{essentially $3$-connected}
if its periodic representation in the plane is $3$-connected.
Let $G$ be a toroidal map. On the torus a non-conctractible curve is a closed curve that can not be continuously deformed to a point. 
Such a curve is called \emph{proper} if it meets $G$ only at vertices (not at edges); the \emph{length} of a proper non-contractible curve is 
the number of vertices it meets. The \emph{face-width} of $G$
is the minimum of the lengths of the proper non-contractible curves for $G$. 
A \emph{non-contractible cycle} of $G$ is a (simple) cycle of edges of $G$ that forms a non-contractible curve (the length of such a cycle is its number of edges).  
The \emph{edge-width} of $G$ is the minimum of the lengths of the non-contractible cycles of $G$. In this section we obtain the main result of the article:
\begin{theorem}\label{cor:ToroidalDrawing}
For each essentially $3$-connected toroidal map $G$, one can compute in linear time
a convex crossing-free straight-line drawing of $G$ on a periodic regular grid 
$\mZ/w\mZ\times\mZ/h\mZ$,  
where ---with $n$ the number of vertices and $c$ the face-width---  $w\leq 2n$ and $h\leq 1+2n(c+1)$. Since $c\leq\sqrt{2n}$, 
the grid area is $O(n^{5/2})$.  
\end{theorem}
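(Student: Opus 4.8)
The plan is to reduce the toroidal problem to the cylindric one already solved in Theorem~\ref{thm:cylindricDrawingbis}, by cutting $G$ along a tambourine. First I would fix a non-contractible cycle $\Gamma$ of $G$ and invoke the tambourine existence result (see~\cite{Bon05} and the preceding paragraph): there is a tambourine $\Gamma_1,\Gamma_2$ whose two cycles are parallel to $\Gamma$. Deleting the edges strictly inside the tambourine yields a cylindric triangulation $G'$ whose boundary-face contours are $\Gamma_1$ and $\Gamma_2$, and whose boundary-to-boundary graph-distance $d$ is strictly less than the length of any shortest non-contractible cycle of $G$ not parallel to $\Gamma$. Applying Theorem~\ref{thm:cylindricDrawingbis} to $G'$ then produces, in linear time, an $x$-monotone $x$-periodic straight-line drawing of $G'$ on $\mZ/w\mZ\times[0..h]$ with $w\leq 2n$ and $h\leq n(2d+1)$, in which every edge on $\Gamma_1$ or $\Gamma_2$ has slope at most $1$ in absolute value.

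Next I would close the cylinder into a torus: raise the height to $h'=h+w+1$, identify the bottom contour $\Gamma_1$ with the top contour $\Gamma_2$ to obtain a periodic grid $\mZ/w\mZ\times\mZ/h'\mZ$, and then reinsert the edges strictly inside the tambourine as straight segments. Each such edge is inserted with the unique choice of vertical wrapping described in the construction, so that all tambourine edges have $x$-span at most $w$ and at least one of them goes strictly rightwards. The point of the extra vertical room $w+1$ is precisely to force each reinserted edge to cross a vertical gap larger than its own $x$-span, hence to have slope strictly greater than $1$ in absolute value. The crossing-free property is then the crux: boundary edges have slope at most $1$ while tambourine edges have slope greater than $1$ in absolute value, and this slope dichotomy — combined with the $x$-monotonicity of the cylindric drawing and the ``at most one edge crosses any given vertical line going one way'' bookkeeping on the tambourine edges — rules out both crossings between a tambourine edge and the rest of the drawing and crossings among the tambourine edges themselves. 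I expect this planarity verification after wrapping to be the main obstacle: it requires making precise that the chosen $x$-spans and wrappings, together with the slope separation, genuinely preclude every crossing.

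Finally I would arrange that the bound depends on the true systole $\gamma$ rather than on $|\Gamma|$. Take a homology basis $\{\Gamma_a,\Gamma_b\}$ of $G$, computable in linear time from a cut-graph; a shortest non-contractible cycle cannot be parallel to both $\Gamma_a$ and $\Gamma_b$, so for at least one of them the boundary-to-boundary distance of the resulting cylindric triangulation is strictly below $\gamma$. Running the whole construction with whichever of $\Gamma_a,\Gamma_b$ yields the smaller such distance $d$ gives $d\leq\gamma-1$. Assembling the bounds: $w\leq 2n$ and $h'=h+w+1\leq n(2d+1)+2n+1\leq n(2\gamma-1)+2n+1=1+n(2\gamma+1)$. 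Invoking $\gamma\leq\sqrt{2n}$ from~\cite{Al78} gives $h'\leq 1+n(2\sqrt{2n}+1)=O(n^{3/2})$, so the grid area is $wh'=O(n^{5/2})$; and since every step (cut-graph and homology basis, tambourine extraction, the cylindric algorithm of Theorem~\ref{thm:cylindricDrawingbis}, the wrapping and the reinsertion of tambourine edges) runs in linear time, the whole algorithm does, which completes the proof.
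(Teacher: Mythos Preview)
Your proposal is correct and follows essentially the same route as the paper: cut along a tambourine parallel to a chosen non-contractible cycle, apply Theorem~\ref{thm:cylindricDrawingbis} to the resulting cylindric triangulation, wrap into a torus with extra vertical room $w+1$, reinsert the tambourine edges and use the slope dichotomy (boundary edges have slope $\leq 1$, tambourine edges have slope $>1$) to certify planarity, and pick $\Gamma$ from a homology basis $\{\Gamma_a,\Gamma_b\}$ so that $d<\gamma$, yielding exactly the bound $h'\leq 1+n(2\gamma+1)$. One small wording slip: you do not ``identify $\Gamma_1$ with $\Gamma_2$''---they remain distinct cycles on the torus---rather you identify $y=0$ with $y=h'$ to make the grid $y$-periodic, and the reinserted tambourine edges run from $\Gamma_2$ to $\Gamma_1$ through the new vertical gap.
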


The fact that $c\leq\sqrt{2n}$ follows from~\cite{Al78}. Indeed, an essentially simple toroidal graph $G$ (with $n$ the number of vertices)
can be triangulated, by adding edges but not adding vertices, so as to yield an essentially simple toroidal triangulation $\tilde{G}$. 
As shown in~\cite{Al78}, $\tilde{G}$ has a non-contractible cycle $\gamma$ of length at most $\sqrt{2n}$, and it is easy to see that this cycle
can be locally deformed into a proper non-contractible curve meeting $G$ at the vertices on $\gamma$.

\subsection{Tambourine: definition, existence, and computation}
Let $G$ be a toroidal map, and let $\Gamma_1,\Gamma_2$ be a pair of homotopic non-contractible cycles of $G$ that are oriented in the same direction.   
The pair $\Gamma_1,\Gamma_2$ is called a \emph{tambourine} if the area $A$ between $\Gamma_1$ and $\Gamma_2$ ---on the right of $\Gamma_1$ and on the left
of $\Gamma_2$--- is a ``ribbon of faces", i.e., $A$ is face-connected 
and the set of edges 
dual to edges in $A\backslash\{\Gamma_1,\Gamma_2\}$  
forms a non-contractible cycle homotopic to $\Gamma_1$ and $\Gamma_2$, see the
second drawing of Fig.~\ref{fig:ToroidalDrawing} for an example. 
(Note that $\Gamma_1$ and $\Gamma_2$ might  
share vertices and edges.) 
It is shown in the master's thesis of Arnaud Labourel (see also~\cite{Bon05}) 
that for each non-contractible cycle $\Gamma$ of a simple toroidal triangulation $G$, 
there exists a tambourine whose two cycles are homotopic to $\Gamma$.
We show here  that this holds more generally 
for essentially $3$-connected toroidal maps. 

\begin{lem}
Let $G$ be an essentially $3$-connected toroidal map and let $\Gamma$ be a non-contractible cycle of $G$.
Then there is a tambourine $\Gamma_1,\Gamma_2$ of $G$ whose two cycles are homotopic to $\Gamma$. 
Moreover one can compute such a tambourine in linear time.
\end{lem}
\begin{proof}
Cutting along $\Gamma$ we obtain a cylindric map $\widehat{G}$, which we consider 
in its annular representation. Let $C$ be the inner boundary of $\widehat{G}$.  
Let $\Gamma_1$ be the smallest (in terms of the enclosed area) cycle that strictly encloses $C$
(i.e., encloses $C$ and is vertex-disjoint from $C$). 
Then, let $\Gamma_2$ be the largest (in terms of the enclosed area) cycle that is strictly
enclosed in $\Gamma_1$ (i.e., is enclosed by $\Gamma_1$ and is vertex-disjoint from $\Gamma_1$). Orient $\Gamma_1$ and $\Gamma_2$ clockwise (in the annular representation). 
Let $A$ be the annular area between $\Gamma_1$ and $\Gamma_2$.
Note that, by minimality of the enclosed area, $\Gamma_1$ has no chord inside. Similarly, by maximality of the enclosed area,
 $\Gamma_2$ has no chord outside; hence both cycles delimiting $A$ have no chordal edge in $A$. 
It just remains to show that there is no vertex in the strict interior of $A$. Assume the set $E$ of such vertices is not empty, and let $H$
be a connected component of the subgraph induced by $E$.     
 Call \emph{vertex of attachment for $\Gamma_1$} (resp. for $\Gamma_2$) a vertex $w\in\Gamma_1$ (resp. $w\in\Gamma_2$) 
such that there is a path from a vertex of $H$ to $w$ that visits only vertices of $H$ before reaching $w$. 
By minimality of $\Gamma_1$, it is easy to see that there is a unique vertex of attachment $v_1$ for $\Gamma_1$.
Similarly, by maximality of $\Gamma_2$, there is a unique vertex of attachment $v_2$ for $\Gamma_2$. 
Let $H_2$ be the graph made by adding to $H$ the vertex $v_2$ as well as all edges connecting $v_2$ to vertices from $H$.
The annular representation is actually an embedding in the plane, 
so the faces of $H_2$ partition the plane. Hence the area corresponding to the inner boundary-face of $G$ is either
inside an inner face $f$ of $H_2$ or is inside the outer face of $H_2$. In the first case, the contour of $f$ yields a cycle 
 that is vertex-disjoint from $\Gamma_1$ and whose interior strictly contains the interior of $\Gamma_2$. 
This is impossible by maximality of $\Gamma_2$. 
In the second case ($\Bin$ in the outer face of $H_2$), 
let $H_{12}$ be the map obtained from $H_2$ by adding
the edges from $H$ to $v_1$. Two subcases can arise. If the area of $\Bin$ is in 
the outer face of $H_{12}$, then  there is a 2-separating curve (for $\widehat{G}$)  
passing by $v_1$ and $v_2$ and enclosing $H$ but not enclosing the inner boundary-face, 
which contradicts the fact that $G$ is essentially $3$-connected; 
if the area of $\Bin$ is in a bounded face of $H_{12}$ then there must exist a cycle $C$ passing
by $v_1$ and by vertices of $H$ such that $C$ encloses $\Bin$, 
which contradicts the minimality of $\Gamma_1$. 
In all cases we reach a contradiction, so $\Gamma_1,\Gamma_2$ form a tambourine.

Let us now justify that $\Gamma_1,\Gamma_2$ can be computed in linear time. Let $\widehat{G_1}$ be $\widehat{G}$ where vertices
and edges of $C$ have been deleted. 
Let $f$ be the inner face of $\widehat{G_1}$ whose interior contains the interior of $C$ (such a face is unique, even when $\widehat{G_1}$
has several connected components), and let $c$
be the unique simple cycle extracted from the contour of $f$ and such that the interior of $c$ contains the interior of $C$. 
Then $\Gamma_1$ equals $c$ (indeed $c$ is vertex-disjoint from $C$, the interior of $c$ contains the interior of $C$,
and no other cycle enclosed by $c$ can have these two properties). Now it is clear that computing $c$ (i.e., 
finding $f$ and then extracting $c$ from $f$) can be done in linear time.    
The computation of $\Gamma_2$ is very similar. Let $\widehat{G_2}$ be the subgraph of $\widehat{G}$ where all vertices and edges
on $\Gamma_1$ and exterior to $\Gamma_1$ have been deleted. Let $c'$ be the unique cycle extracted from the outer face contour of $\widehat{G_2}$
 and such that the interior of $c'$ contains the interior of $C$. Then $\Gamma_2$ equals $c'$, so $\Gamma_2$ is also easily computable in linear time.   
\end{proof}  

\subsection{The drawing algorithm}
Let $G$ be an essentially $3$-connected toroidal map with $n$ vertices, and let $\Gamma_1,\Gamma_2$ be a tambourine of $G$. 
 By deleting the edges that are strictly inside the tambourine, one 
  obtains a cylindric map 
$G'$ with $\Gamma_1$ the outer boundary and $\Gamma_2$ the inner boundary.
  Declare as active for $\Gamma_1$ each vertex $v\in\Gamma_1$ incident to at least an edge $e$ inside the tambourine (with an incidence on the right-side of $\Gamma_1$). 
And declare as active for $\Gamma_2$ each vertex $v\in\Gamma_2$ incident to at least an edge $e$ inside the tambourine (with an incidence on the left-side of $\Gamma_2$).
In the periodic representation of $G'$, if there is a 2-separating curve with no active vertex strictly inside, then it yields a 2-separating curve in the 
periodic representation of $G$, a contradiction. Hence $G'$ is essentially internally $3$-connected. 
We can apply the drawing algorithm of Theorem~\ref{thm:triconn_cyl} to obtain a convex periodic drawing of $G$ on an $x$-periodic grid $\mathbb{Z}/w\mathbb{Z}\times[0..h]$. 
If we augment the height $h$ of the drawing to $h'=h+w+1$, and 
then wrap the $x$-periodic grid $\mZ/w\mZ\times[0..h]$ into
a periodic grid $\mZ/w\mZ\times\mZ/h'\mZ$, and finally insert the
edges inside the tambourine as 
segments (we   
insert the edges in the tambourine $T$
in the unique way such that, looking from bottom to top, at least one edge in $T$ goes
strictly to the right, and all edges going strictly to the right have $x$-span at most $w$; in this way it is easy to check that the 
$x$-span of all edges in $T$ is at most $w$).  
Then the slope properties 
 ---edges on $\Gamma_1$ and $\Gamma_2$ have slope at most $1$ in absolute value
while edges inside the tambourine have slopes greater than $1$ in absolute value--- 
ensure that the resulting drawing is crossing-free and convex. Note that it is actually enough to augment the height
by the least value such that the edges of $T$ have slope greater than $1$ in absolute value.     
 See Fig.~\ref{fig:ToroidalDrawing} for an example,  
where the height is augmented by $4$ (whereas $w=26$).    

We now argue that we can find a tambourine $\Gamma_1,\Gamma_2$ so that the face-distance 
between the two boundaries $\Gamma_1$ and $\Gamma_2$ (in $G'$) is smaller than the 
face-width of $G$; and we can find it \emph{without having to compute} a curve $\Gamma_{\mathrm{min}}$ realizing the face-width
(this is crucial to obtain a linear-time complexity, since it is not known how to find such a curve $\Gamma_{\mathrm{min}}$ in linear time). 
Indeed, let $\{\Gamma_a,\Gamma_b\}$ be a basis of non-contractible cycles of $G$ (computable in linear time, using for instance a cut-graph). 
Then at least one of $\Gamma_a$ or $\Gamma_b$
is not homotopic (parallel) to $\Gamma_{\mathrm{min}}$. Let $\Gamma$ be one among $\{\Gamma_a,\Gamma_b\}$ that is not homotopic to $\Gamma_{\mathrm{min}}$, and let 
$\Gamma_1,\Gamma_2$ be a (computable) tambourine parallel to $\Gamma$.  
Since we are on the torus, $\Gamma_{\mathrm{min}}$ has to cross the tambourine $\Gamma_1,\Gamma_2$.  
 Hence, the 
distance between the boundary-cycles (after
deleting edges in the tambourine $\Gamma_1,\Gamma_2$) is smaller than the face-width. 
In other words, if we choose the one cycle among $\{\Gamma_a,\Gamma_b\}$ that yields
the smaller face-distance between the two boundaries of $G'$, then this distance $d$ 
is smaller than the face-width $c$ of $G$. The grid-size of the drawing of $G'$ satisfies $w\leq 2n$ and $h\leq 2n(d+1)$,
and the grid-size of the drawing of $G$ is $w,h'$ where $h'\leq h+w+1\leq 2n(d+1)+2n+1=1+2n(d+2)$. Since $d<c$ we conclude that
the grid-height of the drawing of $G$ is bounded by $1+2n(c+1)$. 

This concludes the proof of Theorem~\ref{cor:ToroidalDrawing}. 

\vspace{.2cm}

\noindent{\bf Acknowledgments.} 
 The authors 
thank N. Bonichon, D. Gon\c{c}alves, B. L\'ev\^eque, and B. Mohar for interesting discussions.



\small 
\bibliographystyle{abbrv}

\end{document}